
\documentclass[a4paper,11pt,leqno]{article}
\pagestyle{plain}
\usepackage{amssymb, amsmath, amsthm, graphicx}

\textwidth=15cm
\textheight=22cm
\oddsidemargin=5mm
\evensidemargin=5mm
\parskip=3pt
\parindent=8mm
\topmargin=-0.5cm
\marginparwidth=1cm

\newtheorem{thm}{Theorem}[section]
\newtheorem{lem}[thm]{Lemma}
\newtheorem{cor}[thm]{Corollary}
\newtheorem{prop}[thm]{Proposition}

\newtheorem{rem}{Remark}[section]

\numberwithin{equation}{section}

\renewcommand{\a}{\alpha}
\renewcommand{\b}{\beta}
\newcommand{\e}{\varepsilon}
\newcommand{\de}{\delta}
\newcommand{\fa}{\varphi}
\newcommand{\ga}{\gamma}
\renewcommand{\k}{\kappa}
\newcommand{\la}{\lambda}
\renewcommand{\th}{\theta}

\newcommand{\Si}{\Sigma}

\newcommand{\De}{\Delta}

\newcommand{\lan}{\langle}
\newcommand{\ran}{\rangle}

\def\R{{\mathbb{R}}}
\def\N{{\mathbb{N}}}
\def\Z{{\mathbb{Z}}}

\allowdisplaybreaks


\title{Hydrodynamic limit for an evolutional model of \\
two-dimensional Young diagrams}
\author{Tadahisa Funaki and Makiko Sasada}
\date{September 30, 2009}

\begin{document}
\maketitle

\begin{abstract} 
\noindent
We construct dynamics of two-dimensional Young diagrams, which are naturally
associated with their grandcanonical ensembles, by allowing the creation
and annihilation of unit squares located at the boundary of the diagrams.
The grandcanonical ensembles, which were introduced by Vershik \cite{V},
are uniform measures under conditioning on their size (or equivalently, area).
We then show that, as the averaged size of the diagrams diverges, the 
corresponding height variable converges to a solution of a 
certain non-linear partial differential equation under a proper hydrodynamic
scaling. Furthermore, the stationary solution of the limit equation 
is identified with the so-called Vershik curve.  We discuss
both uniform and restricted uniform statistics for the Young diagrams.
\footnote{
Graduate School of Mathematical Sciences,
The University of Tokyo, Komaba, Tokyo 153-8914, Japan.}
\footnote{ 
$\quad$ e-mail: funaki@ms.u-tokyo.ac.jp and sasada@ms.u-tokyo.ac.jp, 
Fax: +81-3-5465-7011.}
\footnote{
\textit{Keywords: zero-range process, exclusion process, hydrodynamic limit,
Young diagram, Vershik curve.}}
\footnote{
\textit{Abbreviated title $($running head$)$: Hydrodynamic limit for 
2D Young diagrams.}}
\footnote{
\textit{MSC: primary 60K35, secondary 82C22.}}
\footnote{
\textit{Supported in part by the JSPS Grants $($A$)$ 18204007
and 21654021.}}
\end{abstract}

\section{Introduction}

The asymptotic shapes of two-dimensional random Young diagrams with large size
were studied by Vershik \cite{V} under several types of statistics including 
the uniform and restricted uniform statistics, which were also called the
Bose and Fermi statistics, respectively.   To each partition 
$p=\{p_1\ge p_2\ge\cdots\ge p_j\ge 1\}$ of a positive integer $n$ by positive
integers $\{p_i\}_{i=1}^j$ (i.e., $n= \sum_{i=1}^j p_i$), a Young diagram is
associated by piling up $j$ sticks of height $1$ and side-length $p_i$,
more precisely, the height function of the Young diagram is defined by
\begin{equation} \label{eq:1.1.1}
\psi_p(u) = \sum_{i=1}^j 1_{\{u<p_i\}}, \quad u\ge 0.
\end{equation}
The closure of the interior of its ordinate set is called the Young diagram
of the partition $p$. Note that, in most literatures, the figures of
Young diagrams are upside-down compared with the graph defined by
\eqref{eq:1.1.1}.

For each fixed $n$, the uniform statistics (U-statistics in short)
$\mu_U^n$ assigns an equal probability to each of possible partitions
$p$ of $n$, i.e., to the Young diagrams of area $n$.  The restricted uniform
statistics (RU-statistics in short) $\mu_R^n$ also assigns an equal 
probability, but restricting to the distinct partitions satisfying
$q= \{q_1>q_2>\cdots>q_j\ge 1\}$.
These probabilities are called canonical ensembles.  Grandcanonical
ensembles $\mu_U^\e$ and $\mu_R^\e$ with parameter $0<\e<1$
are defined by superposing the canonical ensembles in a similar
manner known in statistical physics, see \eqref{eq:2.gcB} and
\eqref{eq:2.gcF} below.  Vershik \cite{V} proved that, under the canonical
U- and RU-statistics $\mu_U^{N^2}$ and $\mu_R^{N^2}$ (with $n=N^2$),
the law of large numbers holds as $N\to\infty$ for the scaled height variable 
\begin{equation} \label{eq:scale}
\tilde{\psi}_p^N(u):= \frac1N \psi_p(Nu), \quad u\ge 0,
\end{equation}
of the Young diagrams $\psi_p(u)$ 
with size (i.e., area) $N^2$ and for $\tilde{\psi}_q^N(u)$ defined similarly,
and the limit shapes $\psi_U$ and $\psi_R$ are given by 
\begin{equation} \label{eq:1.1}
\psi_U(u)=-\frac{1}{\a} \log\big(1-e^{-\a u}\big)
\quad \text{ and } \quad
\psi_R(u)=\frac{1}{\b} \log\big(1+e^{-\b u}\big), \quad u\ge 0,
\end{equation}
with $\a= \pi/\sqrt{6}$ and $\b= \pi/\sqrt{12}$, respectively.
These results can be extended to the corresponding grandcanonical ensembles
$\mu_U^\e$ and $\mu_R^\e$, if the averaged size
of the diagrams is $N^2$ under these measures.  Such types of results are
usually called the equivalence of ensembles in the context of statistical
physics.  The corresponding central limit theorem and large deviation
principle (under canonical ensembles)
were shown by Pittel \cite{P} and Dembo et.\ al.\ \cite{DVZ},
respectively. All these results are at the static level.

The purpose of this paper is to study and extend these results from a
dynamical point of view.  We will see that, to the grandcanonical U- and
RU-statistics, one can associate a weakly asymmetric zero-range
process $p_t$ respectively a weakly asymmetric simple exclusion process
$q_t$ on a set of positive integers with a stochastic reservoir at the
boundary site $\{0\}$ in both processes 
as natural time evolutions of the Young diagrams, or more precisely, 
those of the gradients of their height functions. Then, under 
the diffusive scaling in space and time and choosing the parameter 
$\e =\e(N)$ of the grandcanonical ensembles such that the averaged size of
the Young diagrams is $N^2$, we will derive the hydrodynamic equations in
the limit and show that the Vershik curves defined by \eqref{eq:1.1} 
are actually unique stationary solutions to the limiting non-linear
partial differential equations in both cases.

In Section 2, after defining the ensembles and the corresponding dynamics,
we formulate our main theorems, see Theorems \ref{thm:bthm} and 
\ref{thm:fthm}.  In Section 3, we study the asymptotic behaviors of $\e(N)$
as $N\to\infty$. The weakly asymmetric zero-range process $p_t$ with a
stochastic reservoir at the boundary $\{0\}$ can be transformed into
the weakly asymmetric simple exclusion process $\bar{\eta}_t$ on $\Z$
without any boundary condition.  In Section 4, we study such transformations
and also those for the limit equations, and give the proof of the main 
theorem for the U-case (i.e.\ the case corresponding to the U-statistics).
The hydrodynamic limit for $\bar{\eta}_t$ is
indeed already known \cite{G}, and we apply this result for $\bar{\eta}_t$.
The idea of transforming $p_t$ into $\bar{\eta}_t$,
which is indeed known in the study of particle systems, is useful to avoid 
the difficulty in treating singularities at the boundary $u=0$, which
appear in the limit of $\tilde{\psi}_p^N(u)$.  The main theorem for the
RU-case (i.e.\ the case corresponding to the RU-statistics)
is proved in Section 5.  Our method is to apply the Hopf-Cole
transformation for the microscopic process $q_t$, which was originally
introduced by G\"{a}rtner \cite{G}.  
This transformation linearizes the leading term
in the time evolution $q_t$ even at the microscopic level so that 
one can avoid to show the one-block and two blocks' estimates, which are
usually required in the procedure establishing the hydrodynamic limit.
The only task left is to study the boundary behavior of the transformed
process, but a rather simple argument leads to the desired ergodic
property of our process at the boundary, see Lemma \ref{lem:ergod}
below.  

The corresponding dynamic fluctuations will be discussed in a separate 
paper \cite{FS-1}.  Our dynamics can be interpreted as evolutional models of
(non-increasing) interfaces which separate $\pm$-phases in a zero-temperature
two-dimensional Ising model defined on a first quadrant, see Spohn \cite{Sp}
and Remark \ref{rem:2.2} below.  A randomly growing Young diagram was studied
by Johansson \cite{J1}, \cite{J2} in relation to random matrices.  See 
\cite[Section 16.4]{Fu} for a quick review of some related results.

\section{Two-dimensional Young diagrams and main results}

In this section, for U- and RU-statistics individually, we define
the grandcanonical and canonical ensembles, introduce the corresponding
dynamics and then formulate the main results concerning the space-time
scaling limits for them.  Throughout the paper, we will use the
following notation: $\Z_+=\{0,1,2, \ldots\}$, $\N=\{1,2,3, \ldots\}$,
$\R_+=[0,\infty)$ and $\R_+^\circ=(0,\infty)$.

\subsection{U-statistics}
For each $n\in \N$, we denote by $\mathcal{P}_n$ the set of all
partitions of $n$ into positive integers, that is, the set of all
$p=(p_i)_{i\in\N}$ satisfying $p_1 \ge p_2 \ge \cdots \ge p_i
\ge \cdots$, $p_i \in \Z_+$ and $\sum_{i\in\N} p_i =n$. 
For $n=0$, we define $\mathcal{P}_0=\{0\}$, where $0$ is a sequence
such that $p_i=0$ for all $i\in \N$.
We consider $p$ as an infinite sequence by adding infinitely many
$0$'s rather than a finite sequence as in Section 1.  This will be
convenient from the point of view of the corresponding particle system.
The union of $\mathcal{P}_n$ is denoted by $\mathcal{P}$: $\mathcal{P}
=\cup_{n\in\Z_+} \mathcal{P}_n$.  The sum of $p_i$'s in
$p \in \mathcal{P}$ is described as $n(p)$: $n(p)=\sum_{i\in\N} p_i$,
and called the size or area of $p$.

For $p \in \mathcal{P}$, we assign a right continuous non-increasing 
step-function $\psi_p$ on $\R_+$ called the height function as follows:
\begin{equation} \label{eq:2.1}
\psi_p(u)=\sum_{i\in \N} 1_{\{u < p_i\}}, \quad u\in \R_+.
\end{equation}
In particular, we always have $\int^{\infty}_0 \psi_p(u)du = n(p)$.

For $0 < \e <1$, let $\mu^{\e}_U$ be the probability measure on $\mathcal{P}$
determined by
\begin{equation} \label{eq:2.gcB}
\mu^{\e}_U(p)=\frac{1}{Z_U(\e)}\e^{n(p)}, \quad p \in \mathcal{P},
\end{equation}
where $Z_U(\e)=\prod^{\infty}_{k=1} (1-\e^k)^{-1} \big( = \sum_{n=0}^\infty
p(n)\e^n, p(n)=\sharp \mathcal{P}_n\big)$ is the normalizing
constant.  The measure $\mu^{\e}_U$ has the property
$\mu^{\e}_U|_{\mathcal{P}_n}(p)=\mu^n_U(p), p \in \mathcal{P},$
where $\mu^{\e}_U|_{\mathcal{P}_n}$ stands for the conditional probability
of $\mu^{\e}_U$ on $\mathcal{P}_n$ and $\mu^n_U$ is the uniform probability
measure on  $\mathcal{P}_n$.  The measures
$\mu^{\e}_U$ and $\mu^n_U$ play similar roles to the grandcanonical and
canonical ensembles in statistical physics, respectively.

Now, we construct dynamics of two-dimensional Young diagrams, which
have $\mu^{\e}_U$ as their invariant measures. Let $p_t \equiv p^{\e}_t
=(p_i(t))_{i\in\N}$ be the Markov process on $\mathcal{P}$ defined by means 
of the infinitesimal generator $L_{\e,U}$ acting on functions 
$f:\mathcal{P} \to \R$ as
\begin{equation}  \label{eq:2.2}
L_{\e,U}f(p) = \sum_{i \in \N} \big[ \e 1_{\{p_{i-1} > p_i\}}
\{f(p^{i,+})-f(p)\}+1_{\{p_i > p_{i+1}\}}\{f(p^{i,-})-f(p)\} \big],
\end{equation}
where $p^{i,\pm}=(p_j^{i,\pm})_{j\in \N} \in \mathcal{P}$ are defined by
\begin{equation}  \label{eq:2.3}
 p^{i,\pm}_j = \begin{cases}
	p_j  & \text{if $j \neq i$}, \\
	p_i \pm 1 & \text{if $j=i$}. 
\end{cases} 
\end{equation}
In \eqref{eq:2.2}, we regard $p_0 = \infty$. Note that $n(p_t)$
and $\frak{n}(p_t) :=\sharp\{i\in \N; p_i(t)\ge 1\}$ change in time
but always stay finite.  It is easy to see that $\mu^{\e}_U$ is invariant
under such dynamics for every $0<\e<1$ by showing that 
$\sum_{p\in\mathcal{P}} L_{\e,U}f(p) \mu_U^\e(p) =0$ for a sufficiently wide
class of functions $f$.  We will think of $p_i(t)$ as the position of the
$i$th particle.  The total number of particles $\frak{n}(p_t)$ on the
region $\N$ changes only through the creation and annihilation
of particles at the boundary site $\{0\}$.  In fact, the first part
in the sum \eqref{eq:2.2} with $i= \frak{n}(p)+1$ represents that
a new particle is provided from the boundary site $\{0\}$ to the site 
$\{1\}$ with rate $\e$, while the second part with $i= \frak{n}(p)$
indicates that a particle at $\{1\}$ jumps to $\{0\}$ and disappears
with rate $1$.  In other words, a stochastic reservoir is located at
the boundary site $\{0\}$ of $\N$.

For a probability measure $\nu$ on $\mathcal{P}$ and $N\ge 1$, we denote by $\mathbb{P}_{\nu}^N$ the distribution on the path space $D(\R_+,\mathcal{P})$ of the process $p_t\equiv p_t^N$ with generator $N^2 L_{\e(N),U}$, which is accelerated by the factor $N^2$ and the initial measure $\nu$. Here, $\e(N)$ is defined 
by the relation:
\begin{equation}  \label{eq:2.eB}
E_{\mu^{\e(N)}_U}[n(p)]=N^2.
\end{equation}
Let $X_U$ be the function space defined by
\begin{align*}
X_U:= \{\psi:\R_+^\circ \to \R_+^\circ; \psi \in C^1, \psi^{\prime}<0, 
\lim_{u \downarrow 0}\psi(u)=\infty, \lim_{u \uparrow \infty}\psi(u)=0\},
\end{align*}
where $\psi'=d\psi/du$.  With these notations our first main theorem is stated
as follows.  Recall that the scaled height variable $\tilde{\psi}_p^N(u)$ is
defined by \eqref{eq:scale} for $p\in\mathcal{P}$.

\begin{thm}\label{thm:bthm}
Let $(\nu^N)_{N \ge 1}$ be a sequence of probability measures on 
$\mathcal{P}$ such that 
\begin{equation}  \label{eq:2.ini}
\lim_{N \to \infty} \nu^N[ \sup_{u\in[u_0,u_1]} |\tilde{\psi}_p^N(u)
- \psi_0(u)|>\de]=0
\end{equation}
holds for every $\de>0$, $0<u_0<u_1$ and some function $\psi_0 \in
X_U$. Then, for every $t>0$,
\begin{equation}  \label{eq:2.t}
\lim_{N \to \infty} \mathbb{P}_{\nu^N}^N[|\int^{\infty}_0 f(u) 
\tilde{\psi}_{p_t}^N(u)du - \int^{\infty}_0 f(u)\psi(t,u)du|>\de]=0
\end{equation}
holds for every $\de>0$ and $f \in C_0(\R_+^\circ)$, where $C_0(\R_+^\circ)$
is the class of all functions $f\in C(\R_+^\circ)$ having compact supports in 
$\R_+^\circ$ and $\psi(t,u)$ is the unique classical solution (in the space
$X_U$) of the non-linear partial differential equation (PDE):
\begin{equation}
\left\{
\begin{aligned}
\partial_t\psi & = \partial_u \left(\frac{\partial_u\psi}{1-\partial_u\psi}\right) + \a \frac{\partial_u\psi}{1-\partial_u\psi}, \\ \label{eq:bhydro}
\psi(0,\cdot) & = \psi_0(\cdot), \\
\psi(t,\cdot) & \in X_U, \quad t \ge 0,
\end{aligned}
\right.
\end{equation}
where $\partial_t\psi=\partial\psi/\partial t$,
$\partial_u\psi=\partial\psi/\partial u$
and $\a= \pi/\sqrt{6}$.
\end{thm}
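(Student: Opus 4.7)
The plan is to transform the boundary-driven zero-range process $p_t$ on partitions into a simple exclusion process $\bar{\eta}_t$ on the full line $\Z$ (which carries no boundary), invoke G\"artner's hydrodynamic limit \cite{G} for the weakly asymmetric simple exclusion process, and then translate the resulting density equation back into the stated PDE for the height variable $\psi$.

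First I would make the microscopic correspondence precise. The south-east boundary of the Young diagram $\psi_p$ is a lattice path with horizontal and vertical unit steps; labeling these steps by $\Z$ and setting $\bar{\eta}_x = 1$ for each vertical step and $\bar{\eta}_x = 0$ for each horizontal step yields a bijection between $\mathcal{P}$ and configurations in $\{0,1\}^{\Z}$ with only finitely many $0$'s to one side and finitely many $1$'s to the other. Under this bijection, the addition or removal of a unit square at a concave corner exactly exchanges an adjacent $(1,0)$ pair with $(0,1)$, so the generator \eqref{eq:2.2} becomes that of a weakly asymmetric simple exclusion process on $\Z$ with jump rates $1$ and $\e(N)$. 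The asymptotics of $\e(N)$ from Section 3 place us in the weak-asymmetry regime ($\e(N)\to 1$ with an $O(1/N)$ correction proportional to $\a$), so under the diffusive rescaling the result of \cite{G} yields convergence of the empirical density of $\bar{\eta}_t$ to the classical solution of a viscous Burgers-type equation of the form
\[
\partial_t \bar{\rho} = \partial_x^2 \bar{\rho} + \a\, \partial_x\bigl(\bar{\rho}(1-\bar{\rho})\bigr).
\]

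Second I would translate this density PDE into \eqref{eq:bhydro}. Geometrically, $\psi(t,u)$ and $\bar{\rho}(t,x)$ parametrize the same macroscopic boundary curve in two coordinate systems, namely the axis-aligned frame $(u,\psi)$ and the $45^\circ$-rotated ``arc-length'' frame. Differentiating the change-of-variables formula, which is well-defined as long as $\partial_u\psi \in (-\infty,0)$ (guaranteed by $\psi \in X_U$), relates the two: roughly $-\partial_u\psi/(1-\partial_u\psi)$ corresponds to $\bar{\rho}$. Substituting into the density equation produces \eqref{eq:bhydro}. Existence and uniqueness of the classical solution in $X_U$ reduce, via the same change of variables, to well-posedness of the limit equation for $\bar{\rho}$ with an initial profile in the class obtained from $\psi_0 \in X_U$.

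The main technical obstacle I expect is to control the behavior near $u = 0$: on the $\psi$-side the divergence $\psi(t,u) \to \infty$ as $u \downarrow 0$ is genuinely singular, and this is precisely why routing the argument through $\bar{\eta}_t$ is necessary, since the exclusion-process side sees this divergence only as a regular interior density profile on $\R$. Nonetheless one must verify that (i) the change of variables $\psi \leftrightarrow \bar{\rho}$ is bicontinuous in the relevant topology; (ii) the initial-condition hypothesis \eqref{eq:2.ini} on $\tilde{\psi}^N_p$ translates to convergence of the empirical density of $\bar{\eta}_0$ with regularity sufficient for \cite{G}; and (iii) the test-function observables $\int_0^\infty f(u)\,\tilde{\psi}^N_{p_t}(u)\,du$ with $f \in C_0(\R_+^\circ)$ correspond under the transformation to observables of the density supported on a compact region, so that the conclusion \eqref{eq:2.t} follows from the density-level hydrodynamic limit for $\bar{\eta}_t$.
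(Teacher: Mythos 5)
Your plan follows the same route as the paper: map $p_t$ to a weakly asymmetric simple exclusion process $\bar{\eta}_t$ on $\Z$ via the $45^\circ$/lattice-path correspondence, invoke G\"artner's hydrodynamic limit on the full line, and pull the density equation back to $\psi$ via the change of variables $v = u - \psi(u)$ with $\bar{\rho} = -\partial_u\psi/(1-\partial_u\psi)$. All of that matches Section 4 of the paper.

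The genuine gap is in your item (iii). The observable $\int_0^\infty f(u)\,\tilde{\psi}^N_{p_t}(u)\,du$ does \emph{not} correspond to a compactly supported observable of the density. Writing $F(u)=\int_0^u f$, the paper's key identity \eqref{eq:4.a-0} is
\[
\int^{\infty}_0 f(u)\, \tilde{\psi}_{p_t}^N(u)\,du
= \frac1N \sum_{x\in\Z} F\!\left(\tfrac{\zeta^-_t(x)}N\right)\bar{\eta}_t(x),
\]
where $\zeta^-_t(x)=\sum_{z\le x}(1-\bar{\eta}_t(z))$. The weight $F(\zeta_t^-(x)/N)$ is a \emph{cumulative} functional of the configuration to the left of $x$, and since $F$ stabilizes at the nonzero constant $\int f$ for large arguments, the sum does not localize on the right. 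Consequently the density-level convergence for compactly supported test functions is not sufficient: you also need a uniform-in-$N$ control on the tails of the empirical measure, namely that $\pi_t^N([K,\infty))$ and $\hat{\pi}_t^N((-\infty,-K])$ are small with high probability for $K$ large, \emph{for all} $t$ up to the time horizon. The paper establishes exactly this in Lemma \ref{lem:4.3} by a martingale argument (choosing a smooth cutoff $\fa_K$ and controlling $N^2\bar{L}_{\e(N),U}\langle\pi^N,\fa_K\rangle$ and the bracket of $m^N_t(\fa_K)$), and this lemma is what allows the observable to be truncated and then treated by Proposition \ref{thm:bwasy}. Relatedly, your item (ii) needs more than what you state: the initial hypothesis \eqref{eq:2.ini} must be translated to give \eqref{eq:4.ini} not only for $g\in C_0(\R)$ but for $g\in C^1_b(\R)$ eventually constant at $\pm\infty$, so that the tail estimates in Lemma \ref{lem:4.3} can be seeded at $t=0$ (cf.\ Remark \ref{rem:4.2}); the paper does this in Step 1 of Subsection 4.3, handling the left tail via a mirror-image/particle--hole symmetry. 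Your proposal as written is silent about all of this, and without it the reduction to G\"artner's theorem is incomplete.
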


\begin{rem}
The function $\psi_U$ defined in \eqref{eq:1.1} is the unique stationary 
solution in the class $X_U$ of the equation (\ref{eq:bhydro}).  The curve
in the first quadrant of $xy$-plane determined by the equation $y=\psi_U(x)$
is called the Vershik curve (in U-statistics).
\end{rem}

In this way, the derivation of the Vershik curve is understandable from 
the dynamical point of view.

\begin{rem} \label{rem:2.2}
Spohn discussed in \cite[Appendix A]{Sp} two-dimensional interfacial
dynamics, motivated by the zero-temperature Ising model, under the
periodic boundary condition with symmetric jump rates and derived the
non-linear PDE \eqref{eq:bhydro} with $\a=0$ under the hydrodynamic
scaling limit.  He studied interfaces having graphical
representations as in our setting, but not necessarily being monotone.
See \cite[Section 4]{CS}, \cite{CSS}, \cite{CK} for further studies.
Shlosman \cite{Sh} discussed the similarity between the approach
from the Young diagrams and the Wulff problem in the Ising model.
Aldous and Diaconis \cite{AD} used an idea of the hydrodynamic limit 
to give a \lq\lq soft" proof for the asymptotic behavior of the length
of the longest increasing subsequence of random permutations.
\end{rem}

\begin{rem}
The large deviation rate function $I(\psi)$ under the canonical ensemble 
of U-statistics  $\mu_U^n$ is described in Theorem 1.2 of \cite{DVZ}.
We can compute its functional derivative and find that it is given by the
formula:
$$
\frac{\de I}{\de\psi(u)} = \frac{\psi''(u)+\a\psi'(u)(1-\psi'(u))}
{\psi'(u)(1-\psi'(u))}.
$$
On the other hand, the right hand side of our hydrodynamic equation
\eqref{eq:bhydro} is equal to
$$
\frac{\psi''(u)+\a\psi'(u)(1-\psi'(u))}{(1-\psi'(u))^2}.
$$
These formulas have similarity but are not exactly the same.  Recall
that we discuss the dynamics associated with the grandcanonical ensemble.
The dynamics for the canonical ensemble involve much complexity.
\end{rem}

\subsection{RU-statistics}
Denote by $\mathcal{Q}_n$ the set of all partitions of $n\in\N$
into distinct positive integers, that is, the set of all 
$q=(q_i)_{i\in \N} \in \mathcal{P}_n$ satisfying $q_i > q_{i+1}$
if $q_i >0$.  The union of $\mathcal{Q}_n$ is denoted by $\mathcal{Q}$:
$\mathcal{Q}=\cup_{n\in\Z_+} \mathcal{Q}_n$, where $\mathcal{Q}_0
= \mathcal{P}_0$. Let $n(q)$ be the sum of $q_i$'s in 
$q \in \mathcal{Q}$.  The function $\psi_q$ on $\R_+$ is assigned to
$q \in \mathcal{Q}$ by the relation \eqref{eq:2.1}.

For $0 < \e <1$, let $\mu^{\e}_R$ be the probability measure on $\mathcal{Q}$
determined by
\begin{equation} \label{eq:2.gcF}
\mu^{\e}_R(q)=\frac{1}{Z_R(\e)}\e^{n(q)}, \quad q \in \mathcal{Q},
\end{equation}
where $Z_R(\e)=\prod^{\infty}_{k=1} (1+\e^k) \big( = \sum_{n=0}^\infty
p_{\not=}(n)\e^n, p_{\not=}(n)=\sharp \mathcal{Q}_n\big)$ is the normalizing
constant.  The conditional measure $\mu^{\e}_R|_{\mathcal{Q}_n}$ of
$\mu^{\e}_R$ on $\mathcal{Q}_n$ coincides with the uniform probability measure 
$\mu^n_R$ on $\mathcal{Q}_n$.  The measures $\mu^{\e}_R$ and $\mu^n_R$
are the grandcanonical and canonical ensembles in the RU-statistics,
respectively.

Now, we construct the dynamics associated with $\mu^{\e}_R$. 
Let $q_t \equiv q^{\e}_t =(q_i(t))_{i\in\N}$ be the Markov process on 
$\mathcal{Q}$ with the infinitesimal generator $L_{\e,R}$ acting on functions 
$f:\mathcal{Q} \to \R$ as
\begin{equation}  \label{eq:2.10}
L_{\e,R}f(q) = \sum_{i \in \N} \big[\e 1_{\{q_{i-1} > q_i + 1\}}
\{f(q^{i,+})-f(q)\}+ 1_{\{q_i > q_{i+1} + 1 \text{ or } q_i = 1\}}
\{f(q^{i,-})-f(q)\} \big],
\end{equation}
where $q^{i,\pm}\in\mathcal{Q}$ are defined by the formula \eqref{eq:2.3}
and we regard $q_0 =\infty$. It is easy to see that $\mu^{\e}_R$ is invariant
under such dynamics.  Similarly to the U-case, the model defined by the
generator \eqref{eq:2.10} involves a stochastic reservoir at $\{0\}$.
The only difference is that the creation of a new particle at $\{1\}$
is allowed if this site is vacant.

For a probability measure $\nu$ on $\mathcal{Q}$ and $N\ge 1$, we denote by $\mathbb{Q}_{\nu}^N$ the distribution on the path space $D(\R_+,\mathcal{Q})$ of the process $q_t \equiv q_t^N$ with generator $N^2 L_{\e(N),R}$ and the initial measure $\nu$. Here, $\e(N)$ is defined by the relation:
\begin{equation}  \label{eq:2.eF}
E_{\mu^{\e(N)}_R}[n(p)]=N^2.
\end{equation}
Let $X_R$ be the function space defined by
$$
X_R:= \{\psi:\R_+ \to \R_+; \psi \in C^1,
-1 \le \psi^{\prime} \le 0, \psi'(0)=-1/2, 
\lim_{u \uparrow \infty}\psi(u)=0\}.
$$
Our second main theorem is stated as follows.  The scaled height variable
$\tilde{\psi}^N_{q}(u)$ is defined by \eqref{eq:scale} for $q\in \mathcal{Q}$.

\begin{thm}\label{thm:fthm}
Let $(\nu^N)_{N \ge 1}$ be a sequence of probability measures on $\mathcal{Q}$ such that 
\begin{equation}\label{eq:2.fini}
\lim_{N \to \infty} \nu^N[|\int^{\infty}_0 f(u) \tilde{\psi}^N_{q}(u)du- \int^{\infty}_0 f(u)\psi_0(u)du|>\de]=0
\end{equation}
holds for every $\de>0$, $f \in C_0(\R_+^\circ)$ and some function $\psi_0
\in X_R$. Then, for every $t>0$,
\begin{equation}\label{eq:2.ft}
\lim_{N \to \infty} \mathbb{Q}_{\nu^N}^N[|\int^{\infty}_0 f(u) 
\tilde{\psi}^N_{q_t}(u)du - \int^{\infty}_0 f(u)\psi(t,u)du|>\de]=0
\end{equation}
holds for every $\de>0$ and $f \in C_0(\R_+^\circ)$, where $\psi(t,u)$ is the
unique classical solution (in the space $X_R$) of the non-linear partial 
differential equation:
\begin{equation}
\left\{
\begin{aligned}
\partial_t\psi &= \partial_u^2 \psi + \b \ \partial_u\psi (1+\partial_u\psi),\\
\psi(0,\cdot) & = \psi_0(\cdot), \\
\psi(t,\cdot) & \in X_R, \quad t \ge 0, 
    \label{eq:fhydro}
\end{aligned}
\right.
\end{equation}
and $\b=\pi/\sqrt{12}$.
\end{thm}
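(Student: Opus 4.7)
The plan, as flagged in the introduction, is to carry out G\"artner's Hopf-Cole transformation at the microscopic level. Setting $h^N_x(t):=\psi_{q^N_t}(x)$ and $\xi^N_x(t):=\e(N)^{-h^N_x(t)}$, the asymptotics $\e(N)=1-\b/N+o(1/N)$ to be established in Section~3 give $\la_N:=-\log\e(N)\sim\b/N$, so that whenever $\tilde\psi^N_{q^N_t}\to\psi$ one expects $\xi^N_{[Nu]}(t)\to Z(t,u):=e^{\b\psi(t,u)}$. A direct check shows that $Z=e^{\b\psi}$ obeys
$$
\partial_t Z=\partial_u^2 Z+\b\,\partial_u Z,\qquad \partial_u Z(t,0)+\tfrac{\b}{2}Z(t,0)=0,\qquad Z(t,+\infty)=1,
$$
the Robin condition being the translation of $\partial_u\psi(t,0)=-1/2$ in the class $X_R$. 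The strategy is therefore to derive this linear parabolic problem as the hydrodynamic limit of $\xi^N$ and then recover \eqref{eq:2.ft} by the inverse transformation $\psi=\b^{-1}\log Z$.

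\textbf{Microscopic linearization in the bulk.}
For $x\ge 1$ only the two transitions swapping a particle between $x$ and $x+1$ affect $h^N_x(t)$, so a direct computation of $L_{\e(N),R}\xi^N_x$ splits the generator into a linear and a quadratic piece in $(\eta_x,\eta_{x+1})$, with the quadratic coefficient equal to $\e(e^{\la_N}-1)+(e^{-\la_N}-1)$. The prescription $\e(N)=e^{-\la_N}$ implicit in the definition of $\xi^N_x$ makes this coefficient vanish identically. After expressing $\xi^N_x\eta_x=(\xi^N_{x-1}-\xi^N_x)/(e^{\la_N}-1)$ and $\xi^N_x\eta_{x+1}=(\xi^N_x-\xi^N_{x+1})/(1-e^{-\la_N})$, the generator collapses to the closed affine expression
$$
L_{\e(N),R}\,\xi^N_x = \e(N)\xi^N_{x-1}+\xi^N_{x+1}-(1+\e(N))\xi^N_x.
$$
Multiplied by $N^2$ and split into a symmetric second difference plus a first difference (using $N(1-\e(N))\to\b$), this is a discretization of $\partial_u^2+\b\,\partial_u$, with a martingale remainder whose predictable variation is of order $\la_N^2=O(N^{-2})$ per unit macroscopic time, hence negligible after spatial averaging against a test function. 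In particular the usual one- and two-block estimates are avoided entirely.

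\textbf{Boundary analysis, convergence, and the main obstacle.}
The site $x=0$ must be treated separately, because the reservoir prevents $L\xi^N_0$ from closing as an affine function of the $\xi^N$'s. All that is needed from the boundary for the bulk evolution is however a limiting relation between $\xi^N_0$ and $\xi^N_1$ that plays the role of a boundary condition. From $\xi^N_0(t)-\xi^N_1(t)=\xi^N_1(t)(e^{\la_N}-1)\eta_1(t)$ and $\la_N\sim\b/N$ one obtains $N(\xi^N_1-\xi^N_0)=-\b\,\xi^N_1\,\eta_1+o(1)$, so the Robin condition $\partial_u Z(t,0)=-(\b/2)Z(t,0)$ follows provided $\eta_1(t)$ averages to $1/2$ on the macroscopic time scale. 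The latter is the content of Lemma~\ref{lem:ergod}: driven by a birth rate $N^2\e(N)(1-\eta_1)$ and a death rate $N^2\eta_1$, the variable $\eta_1$ is a two-state Markov chain which equilibrates on the time scale $N^{-2}$ to its stationary density $\e(N)/(1+\e(N))\to 1/2$. Tightness of $\xi^N$ then follows from the uniform bound $1\le\xi^N_x\le\xi^N_0$ and standard martingale moment estimates; uniqueness for the limiting linear problem identifies $Z$; and \eqref{eq:2.ft} is recovered from $\tilde\psi^N_{q^N_t}(u)=(\la_N N)^{-1}\log\xi^N_{[Nu]}(t)$ by continuity of $\log$ on compact subsets of $\R_+^\circ$. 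The principal obstacle is precisely this boundary step: although the two-state equilibration is elementary, one must show that it is uniform enough in time, in spite of the macroscopic fluctuations of $\xi^N_0$, for the Robin boundary condition to be transmitted to the limiting PDE.
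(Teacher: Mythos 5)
Your proposal reproduces the paper's strategy almost exactly: define the microscopic Hopf--Cole variable $\xi^N_x=\e(N)^{-\psi_{q_t}(x)}$ (the paper's $\zeta^N_t(x)=\exp\{-(\log\e)\sum_{y\ge x}\eta^N_t(y)\}$ is the same object up to an index shift), observe that the choice $\la_N=-\log\e(N)$ makes the generator act exactly affinely so that $N^2\bar L_{\e,R}\xi^N_x=N^2\Delta\xi^N_x+N\b(N)\nabla\xi^N_x$ with no nonlinear remainder, convert the reservoir at $\{0\}$ into a Robin condition $2\partial_u\omega(t,0)+\b\omega(t,0)=0$ via the local ergodicity of $\eta^N_1$, control tightness through a uniform bound on $\xi^N_0$, and finally recover the theorem by taking logarithms. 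Your identification of the boundary averaging as the critical obstacle, and the plan to combine the slow (equicontinuous) motion of $\zeta^N(1)$ with the fast time average of $\eta^N_1$ over short time windows, is precisely Lemma~\ref{lem:5.8} of the paper.

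There is, however, a real gap in the way you justify the ergodic averaging step (Lemma~\ref{lem:ergod}). You argue that $\eta^N_1$ is ``a two-state Markov chain'' with birth rate $N^2\e(N)(1-\eta_1)$ and death rate $N^2\eta_1$, equilibrating to $\e/(1+\e)\to1/2$. This is false: $\eta^N_1$ also flips through the interior exchange with site $2$, at rates $N^2\e(N)\,\eta_1(1-\eta_2)$ and $N^2(1-\eta_1)\eta_2$, so its law is not autonomous and its time average is not determined by the reservoir rates alone. The paper's proof avoids this entirely by a conservation-of-mass identity: applying \eqref{eq:5.7-a} with $\fa\equiv 1$ gives
\begin{equation*}
N^2\bar L_{\e(N),R}(X^N_s)=N\bigl(1-2\eta^N_s(1)\bigr)-\b(N)\bigl(1-\eta^N_s(1)\bigr),
\end{equation*}
and integrating the associated martingale decomposition of $X^N_t$ over $[T_1,T_2]$, the stochastic boundedness of $X^N_t$ (Proposition~\ref{prop:x}) and the martingale moment bound force $\int_{T_1}^{T_2}(1-2\eta^N_s(1))\,ds=O(1/N)$. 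This mass-balance argument is essential and cannot be replaced by a fast-mixing heuristic for the single coordinate $\eta_1$; as written, your proof has a genuine hole at the very step you flagged as decisive. Secondly, and more minor, the tightness and the control of $\xi^N_0=e^{\la_N N X^N_t}$ rest on the same Proposition~\ref{prop:x}, which itself needs a separate argument (the martingale estimate on $\lan\pi^N_t,\fa\ran$ for a cutoff $\fa$ and the integrability of $\rho_0$); this should be made explicit rather than subsumed under ``standard martingale moment estimates.''
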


\begin{rem}
The function $\psi_R$ defined in \eqref{eq:1.1} is the unique stationary
solution in the class $X_R$ of the equation (\ref{eq:fhydro}).  The curve
determined by the equation $y=\psi_R(x)$ is called the Vershik curve
(in RU-statistics).
\end{rem}

\section{Asymptotic behaviors of $\e(N)$}

Before giving the proof of our main theorems, we study in this section
the asymptotic behaviors of $\e(N)$ defined by
\eqref{eq:2.eB} and \eqref{eq:2.eF} in U- and RU-statistics,
respectively, as $N\to\infty$.

\subsection{U-statistics}

Let $\e(N)$ be defined by the relation \eqref{eq:2.eB}.

\begin{lem}\label{lem:bexp}
We have that $\e(N)=1- \a/N + O(\log N/N^2)$ as
$N\to\infty$.
\end{lem}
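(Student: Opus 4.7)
The plan is to write the expectation in \eqref{eq:2.eB} as an explicit function of $\e$, obtain its asymptotic expansion as $\e\uparrow 1$, and then invert the relation to read off $\e(N)$.

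Since $\mu^\e_U(p)=\e^{n(p)}/Z_U(\e)$, direct differentiation of $\log Z_U(\e)=-\sum_{k\ge 1}\log(1-\e^k)$ gives
$$
E_{\mu^\e_U}[n(p)] \;=\; \e\,\frac{d}{d\e}\log Z_U(\e) \;=\; \sum_{k=1}^\infty \frac{k\e^k}{1-\e^k}.
$$
Setting $t:=-\log\e>0$, relation \eqref{eq:2.eB} becomes
$$
S(t) \;:=\; \sum_{k=1}^\infty \frac{k}{e^{kt}-1} \;=\; N^2,
$$
so the problem reduces to the behavior of $S(t)$ as $t\downarrow 0$.

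To analyze $S(t)$, I would apply the Euler--Maclaurin formula to $f_t(x):=x/(e^{xt}-1)$, which extends smoothly at $x=0$ with $f_t(0)=1/t$ and $f_t'(0)=-\tfrac12$ (from the Taylor expansion $f_t(x)=1/t-x/2+tx^2/12-\cdots$). The change of variables $u=xt$ yields
$$
\int_0^\infty f_t(x)\,dx \;=\; \frac{1}{t^2}\int_0^\infty \frac{u}{e^u-1}\,du \;=\; \frac{\zeta(2)}{t^2} \;=\; \frac{\pi^2}{6t^2},
$$
and combining this with the trapezoidal correction $\tfrac12 f_t(0)=1/(2t)$ and the identity $\sum_{k\ge 1}f_t(k)=\sum_{k\ge 0}f_t(k)-f_t(0)$ produces
$$
S(t) \;=\; \frac{\pi^2}{6t^2} \;-\; \frac{1}{2t} \;+\; O(1),\qquad t\downarrow 0.
$$
(An alternative route is Mellin inversion: since $\int_0^\infty t^{s-1}S(t)\,dt=\Gamma(s)\zeta(s)\zeta(s-1)$ for $\Re s>2$, shifting the contour past the simple poles at $s=2,1,0$ produces the same expansion, with the $O(1)$ term equal to $\tfrac{1}{24}$, while the trivial zeros of $\zeta$ cancel the remaining poles of $\Gamma$.)

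Using $\pi^2/6=\a^2$, the equation $S(t)=N^2$ reads $\a^2/t^2-1/(2t)+O(1)=N^2$. Writing $t=\a/N+r_N$ with $r_N$ small and expanding to first order gives
$$
N^2 \;-\; \frac{2N^3 r_N}{\a} \;-\; \frac{N}{2\a} \;+\; O(1) \;=\; N^2,
$$
hence $r_N=O(1/N^2)$. Finally $\e(N)=e^{-t}=1-t+O(t^2)=1-\a/N+O(1/N^2)$, which is in particular the claimed $1-\a/N+O(\log N/N^2)$.

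The only delicate step is bounding the Euler--Maclaurin remainder uniformly in $t$: the Bernoulli-polynomial integral form of the remainder, together with the rescaling $u=xt$, shows it is $O(1)$ as $t\downarrow 0$. This is the one place where a careless estimate could introduce a spurious $\log(1/t)$ factor (which would, however, still be absorbed into the lemma's generous $O(\log N/N^2)$ error bound); everything else is bookkeeping.
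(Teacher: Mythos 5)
Your proof is correct, and it takes a genuinely different route from the paper's. The paper proceeds by elementary two-sided inequalities: it sandwiches $E_{\mu^\e_U}[n(p)] = \sum_m \e^m/(1-\e^m)^2$ between $(1-\e)^{-2}\sum_m \e^m/m^2$ and $\a^2(1-\e)^{-2}$ using the AM--GM bound $m \le (1-\e^m)/(1-\e) \le m\e^{(1-m)/2}$, then estimates $\sum_m (1-\e^m)/m^2 = O(\log N/N)$ by an integral comparison. That one-sided bookkeeping is what produces the $\log N$ in the paper's error term. You instead substitute $t=-\log\e$, recognize the sum as a classical Mellin-type object, and extract the precise two-term expansion $S(t)=\a^2/t^2-1/(2t)+O(1)$ (in fact $+1/24+O(t^M)$ for all $M$, since the remaining $\Gamma$-poles are killed by trivial zeros of $\zeta$). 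Inverting this gives the sharper $\e(N)=1-\a/N+O(1/N^2)$, with no logarithm; the lemma's stated $O(\log N/N^2)$ is thus a consequence, not an optimal bound. Your identification of the uniform remainder estimate as the delicate point is apt, and the scaling $f_t^{(j)}(x)=t^{j-1}g^{(j)}(xt)$ with $g(u)=u/(e^u-1)$ indeed shows the $2m$-th order Euler--Maclaurin remainder is $O(t^{2m-2})$, uniformly as $t\downarrow 0$, so no spurious logarithm appears. The trade-off is that your argument requires Euler--Maclaurin or contour-shifting machinery, whereas the paper's version is self-contained and elementary, which may be why the authors were content with the weaker (but entirely sufficient for their purposes) error bound.
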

\begin{proof}
First, we calculate the expected value of the size $n(p)$ of $p\in\mathcal{P}$
under the probability measure $\mu^{\e}_U$.  In fact,
\begin{align*}
E_{\mu^{\e}_U}[n(p)] &=\frac{1}{Z_U(\e)}\sum_{p}n(p)\e^{n(p)} 
	=\e \big(\log{Z_U(\e)}\big)^{\prime} \\
	&=\sum_{k=1}^{\infty} \frac{k \e^k}{1-\e^k}
	=\sum_{k=1}^{\infty}\sum_{m=1}^{\infty}k \e^{mk}
	= \sum_{m=1}^{\infty}\frac{\e^{m}}{(1-\e^m)^2}.
\end{align*}
The last equality follows from the simple identity
$\sum_{k=1}^{\infty}k z^{k}=z/(1-z)^2$ for $0 \le z <1$.
However, the inequality of arithmetic and geometric means and 
some simple estimations prove
\begin{displaymath}
\frac{1}{m} \le \frac{1}{1+\e + \e^2 + \cdots + \e^{m-1}} \le 
\frac{\e^{(-m+1)/2}}{m},
\end{displaymath}
and thus, recalling $\a^2 = \pi^2/6$ and $\e<1$, we have that
\begin{equation*} 
\frac1{(1-\e)^2} \sum_{m=1}^{\infty}\frac{\e^{m}}{m^2}
\le E_{\mu^{\e}_U}[n(p)]
\le \frac1{(1-\e)^2} \sum_{m=1}^{\infty}\frac{\e}{m^2} < \frac{\a^2}{(1-\e)^2}.
\end{equation*}
Therefore, by \eqref{eq:2.eB}, we have for $\e=\e(N)$
\begin{equation} \label{eq:3.2-c}
0< \a^2- (1-\e)^2N^2
\le \a^2 - \sum_{m=1}^{\infty}\frac{\e^{m}}{m^2}
= \sum_{m=1}^{\infty}\frac{1-\e^{m}}{m^2}.
\end{equation}
Since the right hand side tends to $0$ as $\e\uparrow 1$ (or as $N\to\infty$),
we see that $(1-\e)N$ tends to $\a$ as $N\to\infty$ which implies that 
$\e \equiv \e(N) = 1-\a/N +o(1/N)$.  To derive more precise estimate for 
the error term, we will show that the right hand side of \eqref{eq:3.2-c}
admits a bound:
\begin{equation} \label{eq:3.3-c}
\sum_{m=1}^{\infty}\frac{1-\e^{m}}{m^2} \le \frac{C\log N}N,
\end{equation}
with some $C>0$.  Indeed, once this is shown, the proof of the lemma
is concluded.  To prove \eqref{eq:3.3-c}, noting that the function
$f_\e(x) := (1-\e^x)/x^2, x>0,$ is non-increasing, we have that
\begin{align*}
\sum_{m=1}^{\infty}\frac{1-\e^{m}}{m^2}
& \le f_\e(1) + \int_1^\infty f_\e(x)dx
= f_\e(1) -\log\e \int_{-\log\e}^\infty \frac{1-e^{-y}}{y^2} dy  \\
& \le (1-\e) -\log\e \big(-\log(-\log\e)\big)  -\log\e,
\end{align*}
where the last inequality follows by dividing the integral over 
$[-\log\e,\infty)$ into the sum of those over $[-\log\e,1]$ and $[1,\infty)$
and then by estimating the integrands by $1/y$ and $1/y^2$, respectively.
This implies \eqref{eq:3.3-c} by recalling $\e= 1-\a/N +o(1/N)$.
\end{proof}

\begin{rem}
A rude version of Hardy-Ramanujan's formula: $p(n) =\sharp\mathcal{P}_n
\sim e^{\sqrt{2/3}\pi\sqrt{n}}$ as $n\to\infty$ implies that
$$
E_{\mu^{\e}_U}[n(p)] \sim \sum_{n=1}^\infty n
\frac{e^{\sqrt{2/3}\pi\sqrt{n}- (\log \e^{-1})n}}{Z_U(\e)}.
$$
Since the function $f(x):= \sqrt{2/3}\pi\sqrt{x}- (\log \e^{-1})x, x>0$
attains its maximal value at $x(\e) = \big(\pi/(\sqrt{6}\log\e^{-1})\big)^2$,
we see that $E_{\mu^{\e}_U}[n(p)]$ behaves as $x(\e)$ as $\e\uparrow 1$ and
this shows that $\e(N) \sim 1-\a/N$ as $N\to\infty$.
\end{rem}

\subsection{RU-statistics}

Let $\e(N)$ be defined by the relation \eqref{eq:2.eF}.

\begin{lem}  \label{lem:3.2}
We have that $\e(N)=1- \b/N + O(\log N/N^2)$
as $N\to\infty$.
\end{lem}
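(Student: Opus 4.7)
The plan is to parallel the proof of Lemma 3.1 line by line, substituting the Fermi/RU partition function in place of the Bose one. The starting computation is
\[
E_{\mu^\e_R}[n(q)] = \e\bigl(\log Z_R(\e)\bigr)' = \sum_{k=1}^\infty \frac{k\e^k}{1+\e^k}.
\]
The cleanest reduction is the elementary identity $\frac{x}{1+x} = \frac{x}{1-x} - \frac{2x^2}{1-x^2}$ applied with $x=\e^k$, which after multiplying by $k$ and summing yields
\[
E_{\mu^\e_R}[n(q)] = E_{\mu^\e_U}[n(p)] - 2\,E_{\mu^{\e^2}_U}[n(p)].
\]
(Equivalently, expanding $(1+\e^k)^{-1}$ geometrically and swapping sums gives the alternating representation $\sum_{m\ge 1}(-1)^{m-1}\e^m/(1-\e^m)^2$; either form serves.)

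Next I would transport the estimates established in the proof of Lemma 3.1, which in fact yield, for any $\e$ close to $1$,
\[
E_{\mu^\e_U}[n(p)] = \frac{\a^2}{(1-\e)^2} + O\!\left(\frac{\log(1/(1-\e))}{1-\e}\right),
\]
and the same bound with $\e^2$ in place of $\e$; since $1-\e^2=(1-\e)(1+\e)\sim 2(1-\e)$, the two error terms are comparable. A short algebraic manipulation, writing $x=1-\e$ so that $1-\e^2=x(2-x)$, gives
\[
\frac{\a^2}{(1-\e)^2} - \frac{2\a^2}{(1-\e^2)^2} = \frac{\a^2}{x^2}\Bigl(1-\frac{2}{(2-x)^2}\Bigr) = \frac{\b^2}{(1-\e)^2} + O\!\left(\frac{1}{1-\e}\right),
\]
where $\b^2 = \a^2/2 = \pi^2/12$; this is precisely how the constant $\b$ emerges from the Bose constant $\a$. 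Combining these yields
\[
E_{\mu^\e_R}[n(q)] = \frac{\b^2}{(1-\e)^2} + O\!\left(\frac{\log(1/(1-\e))}{1-\e}\right),
\]
and setting this equal to $N^2$ and inverting exactly as at the end of the proof of Lemma 3.1 produces $\e(N) = 1 - \b/N + O(\log N/N^2)$.

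The main technical point I expect is the bookkeeping in the algebraic simplification of $\a^2/(1-\e)^2 - 2\a^2/(1-\e^2)^2$: one must verify that the exact leading term is $\b^2/(1-\e)^2$ and that all lower-order contributions, together with the two $O(\log(1/(1-\e))/(1-\e))$ errors inherited from Lemma 3.1, are absorbed in an $O(\log N/N^2)$ error after the final inversion. One could instead estimate the alternating series $\sum (-1)^{m-1}\e^m/(1-\e^m)^2$ directly, applying the same monotone/integral comparison that gave \eqref{eq:3.3-c}, but the identity route above is shorter because it reuses all of Lemma 3.1's estimates without repeating the argument.
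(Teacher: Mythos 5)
Your proof is correct and takes essentially the same route as the paper: the identity $E_{\mu^\e_R}[n] = E_{\mu^\e_U}[n] - 2E_{\mu^{\e^2}_U}[n]$ is precisely the paper's decomposition $\Sigma_{\rm o}(\e)-\Sigma_{\rm e}(\e)$ combined with $\Sigma_{\rm e}(\e)=\Sigma_{\rm o}(\e^2)+\Sigma_{\rm e}(\e^2)$, and both proofs then reuse Lemma 3.1's estimates at $\e$ and at $\e^2$. The only cosmetic difference is that you Taylor-expand the leading combination $\a^2/(1-\e)^2 - 2\a^2/(1-\e^2)^2$ to isolate $\b^2/(1-\e)^2$ plus an $O(1/(1-\e))$ remainder, whereas the paper works with the exact quantity $\b^2 - (1-\e)^2 N^2$ and bounds it directly as a sum of the two Lemma 3.1 errors.
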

\begin{proof}
First, we calculate the expected value of $n(p)$ under $\mu^{\e}_R$:
\begin{align*}
E_{\mu^{\e}_R}[n(p)] &=\frac{1}{Z_R(\e)}\sum_{p}n(p)\e^{n(p)} 
	=\e \big(\log{Z_R(\e)}\big)^{\prime} \\
	&=\sum_{k=1}^{\infty} \frac{k \e^k}{1+\e^k}
	=\sum_{k=1}^{\infty}\sum_{m=1}^{\infty}k (-1)^{m-1} \e^{mk}.
\end{align*}
Thus, similarly to the U-case, we have that
\begin{equation} \label{eq:f}
E_{\mu^{\e}_R}[n(p)]=\sum_{m=1}^{\infty}(-1)^{m-1}\frac{\e^{m}}{(1-\e^m)^2}
= \Si_{\rm o} (\e)- \Si_{\rm e}(\e),
\end{equation}
where $\Si_{\rm o}(\e)$ and $\Si_{\rm e}(\e)$ are the sums taken over odd and 
even numbers, respectively, i.e.\
\begin{displaymath}
\Si_{\rm o} (\e):=\sum_{m=1}^{\infty} \frac{\e^{2m-1}}{(1-\e^{2m-1})^2},  \quad
\Si_{\rm e} (\e):= \sum_{m=1}^{\infty}\frac{\e^{2m}}{(1-\e^{2m})^2}.
\end{displaymath}
Note that one can change the order of the sum in \eqref{eq:f} since the 
series converges absolutely.  Now recalling $\b^2 = \a^2/2$, by 
\eqref{eq:2.eF}, we have for $\e=\e(N)$
\begin{align*}
\b^2 - (1-\e)^2N^2 = \big\{\a^2 - (1-\e)^2 (\Si_{\rm o}(\e) + \Si_{\rm e}(\e))
\big\} - \big\{ \a^2- 4 (1-\e)^2 \Si_{\rm e}(\e)\big\}/2.
\end{align*}
However, since $\Si_{\rm e}(\e)=\Si_{\rm o}(\e^2) + \Si_{\rm e}(\e^2) $ and $(1+\e)^2 \le 4$,
we have that
\begin{align*}
|\b^2 - (1-\e)^2N^2|
& \le \left| \a^2 - (1-\e)^2 (\Si_{\rm o}(\e) + \Si_{\rm e}(\e)) \right| +
\left| \a^2 - (1-\e^2)^2 (\Si_{\rm o}(\e^2) + \Si_{\rm e}(\e^2)) \right|/2\\
& \le \left\{(1-\e) -\log\e \big(-\log(-\log\e)\big) -\log\e\right\} \\
& \quad+ \left\{(1-\e^2) - 2 \log\e \big(-\log(-2\log\e)\big) 
  -2\log\e\right\}/2.
\end{align*}
The second inequality is shown in the proof of Lemma \ref{lem:bexp}.  
This first implies that $\e\equiv \e(N) = 1-\b/N + o(1)$ and then completes
the proof of the lemma as in the last part of the proof of 
Lemma \ref{lem:bexp}.
\end{proof}

The precise error estimates in Lemmas \ref{lem:bexp} and \ref{lem:3.2}
are only needed in \cite{FS-1}, see Remark \ref{rem:5.1} below.

\section{Proof of Theorem \ref{thm:bthm}}

This section gives the proof of Theorem \ref{thm:bthm} for the U-case.
In the process $p_t$, the particles are distinguished from each other
and numbered from the right.  However, if we are only concerned with the
number of particles at each site and define $\xi_t =
(\xi_t(x))_{x\in \Z_+}$ by $\xi_t(x) = \sharp\{i; p_i(t)=x\} \in 
\Z_+$ for $x\in \N$ and $\xi_t(0)=\infty$, then $\xi_t$ becomes the
weakly asymmetric zero-range process on $\N$ with the weakly asymmetric
stochastic reservoir at $\{0\}$.  We can think of $\xi_t(x)$ as the 
(negative) gradient of the height function $\psi_{p_t}$ at $u=x$ in the
sense that $\xi_t(x) = \psi_{p_t}(x-1)-\psi_{p_t}(x)$.

Actually, the stochastic reservoir for $p_t$ or $\xi_t$ located at $\{0\}$
can be removed under a simple transformation.  Indeed, we transform the 
process $p_t$ into another process $\bar{\eta}_t$ on $\Z$, which is roughly
defined as follows: With each $p\in\mathcal{P}$, we associate a family of
particles located at $(i,p_i)$ in the $xy$-plane and project them 
perpendicularly to the line $\{y=-x\}$ rescaled by $\sqrt{2}$.  Or, one can
say that we first rotate the $xy$-plane by 45 degree to the left-handed
direction and then project the particles to the $x$-axis rescaled by 
$\sqrt{2}$. This determines a configuration $\bar{\eta}$ on $\Z$.  Such
transformation is sometimes used in the study of particle systems.
As we will see, in the RU-case, one can not find this kind of nice
transformation which removes the stochastic reservoir.

\subsection{Transformation for the process $p_t$}

We introduce a transformation of our process $p_t$ on $\N$ to a weakly 
asymmetric simple exclusion process $\bar{\eta}_t$ on $\Z$ mentioned above.
Denote by $\chi_U$ the state space of the transformed process:
\begin{equation*}
\chi_U:=\{\bar{\eta} \in \{0,1\}^{\Z}; \sum_{x \le 0}(1-\bar{\eta}(x))
=\sum_{x \ge 1}\bar{\eta}(x) \ < \ \infty\}.
\end{equation*}
In particular, if $\bar{\eta}\in\chi_U$, then there exist $x_\pm\in\Z$ 
such that $\bar{\eta}(x) =1$ for all $x\le x_-$ and $\bar{\eta}(x)=0$ for
all $x\ge x_+$.  For $\bar{\eta} \in \chi_U$, we assign two functions 
$\zeta^{-}_{\bar{\eta}}$ and $\zeta^{+}_{\bar{\eta}}$ on $\Z$ by the 
following rule:
\begin{equation} \label{eq:4.1-a}
\zeta^{-}_{\bar{\eta}}(x)=\sum_{z \le x}(1-\bar{\eta}(z))
\quad \text{ and } \quad 
\zeta^{+}_{\bar{\eta}}(x)=\sum_{z \ge x+1}\bar{\eta}(z), \quad x\in \Z.
\end{equation}
By definition, $\zeta^-_{\bar{\eta}}$ and $\zeta^{+}_{\bar{\eta}}$ are 
monotone non-negative integer-valued functions. Now, we construct one-to-one 
correspondence between $\chi_U$ and $\mathcal{P}$. For 
$\bar{\eta} \in \chi_U$, we assign $p^{\bar{\eta}}=
(p_i^{\bar{\eta}})_{i\in\N}\in \mathcal{P}$ by the following rule:
\begin{displaymath}
p^{\bar{\eta}}_i=\zeta^-_{\bar{\eta}}(x_i), \quad i \in \N,
\end{displaymath}
where $x_i$ is the unique element of $\Z$ which satisfies 
$\zeta^{+}_{\bar{\eta}}(x_i-1)=i$ and $\zeta^{+}_{\bar{\eta}}(x_i)=i-1$.
In other words, the family $\{x_i\}_{i\in\N}$ is determined by numbering
the set $\{x\in\Z;\bar{\eta}(x)=1\}$ by $i\in\N$ from 
the right and $p_i^{\bar{\eta}}= \sharp\{x\le x_i; \bar{\eta}(x)=0\}$.
We can show that the map $\bar{\eta} \to p^{\bar{\eta}}$ is well-defined and
also it is a bijection from $\chi_U$ to $\mathcal{P}$. So we denote its 
inverse map by $p \to \bar{\eta}^p$.  Note that the origin $0$ is determined
by the condition $\zeta^-_{\bar{\eta}}(0)=\zeta^{+}_{\bar{\eta}}(0)$ or
equivalently $\sharp\{x\le 0; \bar{\eta}(x)=0\} = \sharp\{x\ge 1; 
\bar{\eta}(x)=1\}$, i.e., the number of empty sites on the left to the
origin is equal to that of particles on the right to the site $1$.

We now consider the Markov process $\bar{\eta}_t$ on $\chi_U$ with the 
generator $\bar{L}_{\e,U}$ acting on functions $f:\chi_U \to \R$ as
\begin{displaymath}
\bar{L}_{\e,U}f(\bar{\eta}) = \sum_{x \in \Z} \big[\e c_+(x,\bar{\eta})
+ c_-(x,\bar{\eta}) \big] \{f(\bar{\eta}^{x,x+1})-f(\bar{\eta})\},
\end{displaymath}
where 
\begin{equation}  \label{eq:4.c-jump}
c_+(x,\bar{\eta}) = 1_{\{\bar{\eta}(x)=1,\bar{\eta}(x+1)=0\}},
\quad
c_-(x,\bar{\eta}) = 1_{\{\bar{\eta}(x)=0,\bar{\eta}(x+1)=1\}},
\end{equation}
and
\begin{equation}  \label{eq:4.eta}
 \bar{\eta}^{x,y}(z) = \begin{cases}
	\bar{\eta}(z)  & \text{if $z \neq x,y$}, \\
	\bar{\eta}(y)  & \text{if $z=x$}, \\ 
	\bar{\eta}(x)  & \text{if $z=y$}. 
\end{cases} 
\end{equation}
Note that the relation $\zeta^-_{\bar{\eta}}(0)
=\zeta^+_{\bar{\eta}}(0)$
is invariant under the transition from $\bar{\eta}$ to $\bar{\eta}^{x,x+1}$ 
for all $x\in\Z$. The following lemma is easy so that the proof is omitted.

\begin{lem}
Two processes $\{p_t\}_{t\ge 0}$ and $\{p^{\bar{\eta}_t}\}_{t\ge 0}$ have
the same distributions on the path space $D(\R_+,\mathcal{P})$.
\end{lem}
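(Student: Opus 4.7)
The plan is to verify that the bijection $\bar\eta \leftrightarrow p^{\bar\eta}$ intertwines the two Markov generators, so that the push-forward of $\bar\eta_t$ under $\bar\eta \mapsto p^{\bar\eta}$ is the Markov process on $\mathcal{P}$ generated by $L_{\e,U}$. Both processes are non-explosive (in state $p$, the total active jump rate is bounded by a constant times $\frak{n}(p)+1$, which is finite), and the initial distributions agree by construction, so an intertwining identity of the form $\bar L_{\e,U}(F\circ(\bar\eta\mapsto p^{\bar\eta}))(\bar\eta) = (L_{\e,U}F)(p^{\bar\eta})$ on cylinder functions $F$ will deliver equality of laws on $D(\R_+,\mathcal{P})$ via standard martingale-problem uniqueness.

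The core step is to check, transition-by-transition, that each elementary exclusion move in $\bar\eta_t$ corresponds to exactly one term in \eqref{eq:2.2}. First I would observe that if $\bar\eta(x)=1$ and $\bar\eta(x+1)=0$, then $x=x_i$ for a unique $i\in\N$, and a short computation from \eqref{eq:4.1-a} gives $\zeta^-_{\bar\eta^{x,x+1}}(x+1) = \zeta^-_{\bar\eta}(x_i)+1$, so that $p^{\bar\eta^{x,x+1}} = (p^{\bar\eta})^{i,+}$ with all other coordinates unchanged; symmetrically, a left jump with $\bar\eta(x)=0$, $\bar\eta(x+1)=1$ and $x+1 = x_i$ produces $(p^{\bar\eta})^{i,-}$. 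Next I would match the admissibility constraints: since the particle positions $(x_i)$ are strictly decreasing, the number of empty sites in $(x_i,x_{i-1}]$ equals $x_{i-1}-x_i-1$, so $\bar\eta(x_i+1)=0 \iff x_{i-1}>x_i+1 \iff p_{i-1}>p_i$, and the analogous equivalence $\bar\eta(x_i-1)=0 \iff p_i>p_{i+1}$ holds on the other side. The rates $\e$ (right) and $1$ (left) in $\bar L_{\e,U}$ then coincide verbatim with those in \eqref{eq:2.2}.

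The main point requiring care, and where I expect the only real bookkeeping, is the handling of the boundary/reservoir terms. For $i=\frak{n}(p)+1$ the convention $p_0=\infty$ reduces $1_{\{p_{i-1}>p_i\}}$ to $1_{\{p_{\frak{n}(p)}\ge 1\}}$; on the $\bar\eta$-side this is precisely the condition that the rightmost particle of the infinite left-block can hop right at rate $\e$, which is what encodes creation at site $\{1\}$ in $p_t$. Symmetrically, $i=\frak{n}(p)$ with $p_i\ge 1$ forces the empty site $x_i-1$ (immediately to the left of the leftmost ``staircase'' particle), so the rate-$1$ left jump in $\bar L_{\e,U}$ matches the annihilation term of $L_{\e,U}$; when additionally $p_{\frak{n}(p)}=1$ this hop absorbs the particle into the left block, consistently with $\frak{n}(p)$ dropping by one. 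The normalization $\zeta^-_{\bar\eta}(0)=\zeta^+_{\bar\eta}(0)$ built into the definition of $\chi_U$ keeps the labelling $i\leftrightarrow x_i$ stable under all such moves, so every transition in $\bar\eta_t$ is accounted for, completing the identification of the two generators.
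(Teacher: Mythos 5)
The paper explicitly omits the proof of this lemma as ``easy,'' so there is no argument in the text to compare against. Your proposal supplies a correct and complete proof by the natural route: verify that the bijection $\bar\eta\leftrightarrow p^{\bar\eta}$ intertwines $\bar L_{\e,U}$ with $L_{\e,U}$ on cylinder functions and invoke martingale-problem uniqueness (together with the non-explosion observation). The transition-by-transition bookkeeping is right: $\zeta^-_{\bar\eta^{x,x+1}}(x+1)=\zeta^-_{\bar\eta}(x_i)+1$ gives $p^{\bar\eta^{x_i,x_i+1}}=(p^{\bar\eta})^{i,+}$ with all other coordinates fixed, and the admissibility equivalences $\bar\eta(x_i+1)=0\Leftrightarrow x_{i-1}>x_i+1\Leftrightarrow p_{i-1}>p_i$ and $\bar\eta(x_i-1)=0\Leftrightarrow p_i>p_{i+1}$ match the indicator constraints in \eqref{eq:2.2}, with the rates $\e$ and $1$ agreeing term by term. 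The reservoir interpretation for $i=\frak{n}(p)+1$ and $i=\frak{n}(p)$ is, as you note, not a separate case in the generator but falls out of the same identities; and the stability of the labelling $i\leftrightarrow x_i$ follows from the nearest-neighbour exclusion rule (particles cannot cross) together with the preservation of $\zeta^-_{\bar\eta}(0)=\zeta^+_{\bar\eta}(0)$, which pins the origin. Two small phrasing nits that do not affect correctness: the convention $p_0=\infty$ is invoked in the creation term only when $\frak{n}(p)=0$ (otherwise $p_{\frak{n}(p)}\ge1$ already), and the non-explosion argument is better stated as: $\frak{n}(p_t)$ has birth rate at most $\e$ and so remains a.s.\ finite, which keeps the total jump intensity finite on any bounded time interval.
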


For a probability measure $\nu$ on $\chi_U$ and $N\ge 1$, we denote by 
$\bar{\mathbb{P}}_{\nu}^N$ the distribution on the path space $D(\R_+,\chi_U)$
of the process $\bar{\eta}_t^N$ with generator $N^2 \bar{L}_{\e(N),U}$ and 
the initial measure $\nu$, where $\e(N)$ is defined by \eqref{eq:2.eB}.
By Lemma \ref{lem:bexp}, since $\e(N)$ is close to $1$ for large $N$, we can
think of the process $\bar{\eta}_t^N$ as a weakly asymmetric simple exclusion
process on $\Z$.  The hydrodynamic limit of such process is already known.  
Indeed, let $Y_U$ be the function space defined by
\begin{align*}
Y_U:= \{\rho:\R \to (0,1); \rho \text{ is continuous}, \int^0_{-\infty}
(1-\rho(v))dv=\int^{\infty}_0 \rho(v)dv < \infty \}.
\end{align*}
Then, for the scaled empirical measures of the process $\bar{\eta}_t^N$ 
defined by
\begin{equation} \label{eq:4.pi}
\pi_t^N(dv) = \frac1N \sum_{x\in \Z}\bar{\eta}_t^N(x) \de_{x/N}(dv),
 \quad t\ge 0, \; v \in \R,
\end{equation}
we have the following proposition, see G\"{a}rtner \cite{G}:

\begin{prop} \label{thm:bwasy}
Let $(\nu^N)_{N \ge 1}$ be a sequence of probability measures on $\chi_U$ 
such that 
\begin{equation} \label{eq:4.ini}
\lim_{N \to \infty} \nu^N[|\int^{\infty}_{-\infty} g(v) \pi_0^N(dv)
- \int^{\infty}_{-\infty} g(v)\rho_0(v)dv|>\de]=0
\end{equation}
holds for every $\de>0$, $g \in C_0(\R)$ and some function $\rho_0 \in
Y_U$. Then, for every $t>0$,
\begin{equation} \label{eq:4.2-a}
\lim_{N \to \infty} \bar{\mathbb{P}}_{\nu^N}^N[| \int^{\infty}_{-\infty}
g(v) \pi_t^N(dv)- \int^{\infty}_{-\infty} g(v)\rho(t,v)dv|>\de]=0
\end{equation}
holds for every $\de>0$ and $g \in C_0(\R)$, where $\rho(t,v)$ is the unique
classical solution of the following partial differential equation:
\begin{equation}\label{eq:bwasy}
\left\{
\begin{aligned}
\partial_t\rho & = \partial_v^2 \rho + \a \partial_v\big(\rho(1-\rho)\big),\\
\rho(0,\cdot) & = \rho_0(\cdot).
\end{aligned}
\right.
\end{equation}
\end{prop}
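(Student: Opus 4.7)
The plan is to follow the standard martingale approach for hydrodynamic limits of weakly asymmetric exclusion processes on $\Z$. For a test function $g\in C_0(\R)$ of class $C^2$ (the $C^2$ restriction can be removed by mollification at the end), Dynkin's formula gives
\begin{equation*}
\lan \pi_t^N, g\ran = \lan \pi_0^N, g\ran + \int_0^t N^2 \bar{L}_{\e(N),U}\lan \pi_s^N, g\ran\, ds + M_t^{N,g},
\end{equation*}
where $M_t^{N,g}$ is a martingale whose quadratic variation will be shown to vanish. A direct calculation splits $N^2\bar{L}_{\e(N),U}\bar\eta(x)$ into a symmetric part equal to $N^2 \De^N\bar\eta(x)$ (the discrete Laplacian) and an antisymmetric part equal to $N^2(\e(N)-1)\big(j^+(x,\bar\eta)-j^+(x-1,\bar\eta)\big)$, where $j^+(x,\bar\eta):=\bar\eta(x)(1-\bar\eta(x+1))$. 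After two (resp.\ one) discrete summations by parts against $N^{-1}g(x/N)$, and using the scaling $N(1-\e(N))\to\a$ from Lemma \ref{lem:bexp}, the time integrand becomes asymptotically $N^{-1}\sum_x g''(x/N)\bar\eta_s(x) - \a N^{-1}\sum_x g'(x/N)j^+(x,\bar\eta_s) + o(1)$.

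The crucial step is then to replace the local occupation $j^+(x,\bar\eta)$ by $\rho(1-\rho)$, where $\rho$ is the macroscopic density. I would carry this out through the classical one-block and two-block estimates of Guo-Papanicolaou-Varadhan: first replace $j^+(x,\bar\eta)$ by the empirical average $\bar\eta_\ell(x)(1-\bar\eta_\ell(x))$ over a microscopic block of size $\ell$, using the entropy inequality with respect to a Bernoulli product measure, which is invariant for the symmetric SSEP and whose Dirichlet form dominates the asymmetric one up to a factor close to $1$; then pass from $\ell$ to a macroscopic scale via a two-block estimate. The predictable quadratic variation of $M^{N,g}$ is of order $1/N$ by L\'{e}vy's formula, so $M_\cdot^{N,g}\to 0$ in probability by Doob's inequality. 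Tightness of $\{\pi_\cdot^N\}$ in $D(\R_+,\M)$ then follows from the Aldous-Kurtz criterion via the above drift and martingale bounds; any limit point is a weak solution of \eqref{eq:bwasy}, and uniqueness of such a solution in $Y_U$ (standard for this viscous-Burgers-type equation, e.g.\ via the Hopf-Cole substitution $\rho = 1+\a^{-1}\partial_v\log u$ linearizing \eqref{eq:bwasy} to the heat equation) identifies the limit.

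The main obstacle I expect is the replacement lemma on the non-compact state space $\chi_U$: one cannot literally use the product Bernoulli measures $\otimes_x \mathrm{Ber}(\bar\rho)$ as a reference, since these have constant asymptotic density $\bar\rho$ rather than $1$ at $-\infty$ and $0$ at $+\infty$, and so are not supported on $\chi_U$. A natural remedy is to couple the dynamics on $\chi_U$ with a product-Bernoulli dynamics inside a large finite window that contains the support of $g$, performing the relative entropy estimates only in this window and exploiting the near-freezing of $\bar\eta$ far outside the front to keep cut-off errors negligible; the integrability condition $\int(1-\rho)+\int\rho<\infty$ in $Y_U$ is then inherited from the conserved microscopic quantity $\sum_{x\le 0}(1-\bar\eta)=\sum_{x\ge 1}\bar\eta$. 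A secondary concern is showing that the flow of \eqref{eq:bwasy} preserves $Y_U$, which can be handled by a maximum-principle argument for $\rho\in(0,1)$ together with integration of the equation to control the tails of $1-\rho$ at $-\infty$ and $\rho$ at $+\infty$.
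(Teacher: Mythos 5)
The paper does not prove Proposition \ref{thm:bwasy} at all: it cites G\"artner \cite{G} and moves on. More to the point, G\"artner's method — the one the present paper implicitly relies on here and then reproduces explicitly for the RU-case in Section 5 — is the \emph{microscopic Hopf--Cole transformation}. One passes from $\bar\eta_t$ to $\zeta_t(x)=\e^{-\sum_{y\ge x}\bar\eta_t(y)}$ (or a similar exponential of a discrete primitive), under which the weakly asymmetric generator becomes, to leading order, a linear discrete heat operator. The nonlinearity $j^+(x,\bar\eta)=\bar\eta(x)(1-\bar\eta(x+1))$ then never needs to be replaced by $\rho(1-\rho)$: the equation is already linear at the microscopic level, so one proves tightness and identifies the (unique, linear PDE) limit without any local-equilibrium input. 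The paper states this explicitly in the Introduction as the reason for its choice of method — \lq\lq one can avoid to show the one-block and two blocks' estimates.'' Your proposal takes the orthogonal, Guo--Papanicolaou--Varadhan entropy route: Dynkin formula, discrete summation by parts, replacement of $j^+$ by $\bar\eta_\ell(1-\bar\eta_\ell)$ via one- and two-block estimates, and only then use Hopf--Cole macroscopically for uniqueness. That route is in principle viable, but it buys you exactly the technical baggage you yourself flag: on $\chi_U$ there is no translation-invariant reference product measure, the entropy of $\nu^N$ relative to any fixed Bernoulli measure is not $O(N)$ in the usual sense, and the infinite-volume cut-off arguments are genuinely delicate (cf.\ the literature on zero-range/exclusion in infinite volume). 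By contrast, the Hopf--Cole route costs only an $L^\infty$ bound and a tightness argument for the transformed field, as in Subsection 5.3 of the paper, which is why both G\"artner and the present authors adopt it. So your plan is not wrong, but it is substantially harder than what the paper's reference accomplishes, and the remedies you sketch for the non-compactness (windowed entropy estimates, coupling, \lq\lq near-freezing'' far from the front) are left at the level of intentions rather than a proof; as written, the replacement lemma on $\chi_U$ is a real gap that you would have to close before the argument is complete.
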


Kipnis et al.\ \cite{KOV} also studied the hydrodynamic limit of 
weakly asymmetric simple exclusion processes under the periodic boundary
conditions.

\begin{rem}  \label{rem:4.1}
The unique solution of (\ref{eq:bwasy}) satisfies that
$\rho(t, \cdot) \in Y_U$ for all $t>0$ if $\rho_0\in Y_U$.
This fact (except the equality of two integrals in the definition of $Y_U$)
is seen by regarding the non-linear PDE \eqref{eq:bwasy} as a linear PDE:
$\partial_t\rho = \partial_v^2 \rho + b(t,v) \partial_v\rho$ with
$b(t,v) = \a (1-2\rho(t,v))$, in which $\rho(t,v)$ is considered to be
already given, and then by relying, for instance, on a probabilistic
representation of $\rho(t,v)$: $\rho(t,v) = E_v[\rho_0(X_t^{(t)})]$
in terms of the solution $(X_s) = (X_s^{(t)})_{0\le s \le t}$ of the
stochastic differential equation: $dX_s = \sqrt{2}dB_s + b(t-s,X_s)ds, 
0\le s \le t, X_0 = v$ for each $t>0$, where $B_s$ is the one-dimensional
Brownian motion.  The equality of two integrals: $\int_{-\infty}^0
(1-\rho(t,v))dv = \int_0^\infty \rho(t,v)dv$ follows directly from the
PDE \eqref{eq:bwasy} or by taking limits from the microscopic systems.
\end{rem}

Proposition \ref{thm:bwasy} is formulated only for the test functions
$g$ having compact supports. We also need the following asymptotic
behaviors of the tails of $\pi_t^N$.

\begin{lem} \label{lem:4.3}
Assume that the following two conditions \eqref{eq:4.1a-1} and
\eqref{eq:4.1a-2} hold for $t=0$.  Then, for every $t>0$, we have that
\begin{equation}  \label{eq:4.1a-1}
\lim_{N \to \infty} \bar{\mathbb{P}}_{\nu^N}^N[| \pi_t^N([0,\infty))- 
\int^{\infty}_0\rho(t,v)dv|>\de]=0,
\end{equation}
and 
\begin{equation}  \label{eq:4.1a-2}
\lim_{N \to \infty} \bar{\mathbb{P}}_{\nu^N}^N[| \hat{\pi}_t^N((-\infty,0])-
\int_{-\infty}^0(1-\rho(t,v))dv|>\de]=0,
\end{equation}
for every $\de>0$, where
$$
\hat{\pi}_t^N(dv) = \frac1N \sum_{x\in \Z}(1-\bar{\eta}_t^N(x)) 
\de_{x/N}(dv), \quad t\ge 0, \; v \in \R.
$$
\end{lem}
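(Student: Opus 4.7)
My plan is to reduce the two statements to a single one by exploiting the constraint defining $\chi_U$, then truncate the indicator $1_{[0,\infty)}$ so Proposition~\ref{thm:bwasy} applies, and finally control the resulting tail uniformly in $N$ via a flux argument.

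For the reduction, the defining relation of $\chi_U$ yields the deterministic identity
\begin{equation*}
\pi_t^N([0,\infty)) - \hat\pi_t^N((-\infty,0]) = \frac{\bar\eta_t^N(0)}{N} \in \{0,1/N\},
\end{equation*}
so the two quantities differ by $O(1/N)$. Combined with the equality $\int_0^\infty \rho(t,v)\,dv = \int_{-\infty}^0 (1-\rho(t,v))\,dv$ supplied by Remark~\ref{rem:4.1}, the statements \eqref{eq:4.1a-1} and \eqref{eq:4.1a-2} are equivalent and I focus on \eqref{eq:4.1a-1}. For the truncation, fix for each $L>1$ a function $g_L \in C_0(\R)$ with $g_L = 1$ on $[0,L]$, $0 \le g_L \le 1$, and support contained in $[-1/L, L+1]$. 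Then
\begin{equation*}
0 \le \pi_t^N([0,\infty)) - \int g_L\, d\pi_t^N \le \pi_t^N([-1/L, 0)) + \pi_t^N((L,\infty)),
\end{equation*}
and Proposition~\ref{thm:bwasy} gives $\int g_L\, d\pi_t^N \to \int g_L \rho(t,v)\, dv$ in probability, the right-hand side tending to $\int_0^\infty \rho(t,v)\,dv$ as $L\to\infty$ by dominated convergence since $\rho(t,\cdot)\in Y_U$. The boundary term $\pi_t^N([-1/L,0))$ is bounded deterministically by $1/L + 1/N$. Thus the conclusion reduces to showing $\lim_{L\to\infty}\limsup_{N\to\infty}\bar{\mathbb P}^N_{\nu^N}[\pi_t^N((L,\infty)) > \de] = 0$ for every $\de>0$.

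At $t=0$ this tail estimate is immediate: the hypothesis at $t=0$ combined with Proposition~\ref{thm:bwasy} applied to a smooth cut-off of $1_{[0,L]}$ gives $\pi_0^N((L,\infty)) = \pi_0^N([0,\infty)) - \pi_0^N([0,L]) \to \int_L^\infty \rho_0(v)\,dv$ in probability, which vanishes as $L\to\infty$ because $\rho_0\in Y_U$. To propagate it to $t>0$, I would use the flux identity $\pi_t^N((L,\infty)) - \pi_0^N((L,\infty)) = N^{-1} J^N_{NL,NL+1}(t)$, with $J^N_{x,x+1}(t)$ the signed current across bond $(x,x+1)$ up to time $t$. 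Using $\e(N) = 1 - \a/N + o(1/N)$ from Lemma~\ref{lem:bexp}, the compensator of $N^{-1} J^N_{NL,NL+1}(t)$ equals, up to $o(1)$,
\begin{equation*}
\int_0^t N\, E\bigl[\bar\eta_s^N(NL) - \bar\eta_s^N(NL+1)\bigr]\,ds - \a\int_0^t E\bigl[\bar\eta_s^N(NL)(1-\bar\eta_s^N(NL+1))\bigr]\,ds,
\end{equation*}
while its martingale part has bracket bounded by $2\int_0^t E[\bar\eta_s^N(NL) + \bar\eta_s^N(NL+1)]\,ds$. The principal obstacle is upgrading the weak statement in Proposition~\ref{thm:bwasy} into control of the single-site mean $E[\bar\eta_s^N(NL)]$ and of the discrete gradient $N\, E[\bar\eta_s^N(NL) - \bar\eta_s^N(NL+1)]$ at the fixed macroscopic point $L$. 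I would handle this by spatial averaging over a small window $[L-\eta, L+\eta]$ followed by the uniform continuity of the solution $\rho(s,\cdot)$ of \eqref{eq:bwasy}. Since $\rho(s,L)\to 0$ and $\partial_v \rho(s,L) \to 0$ as $L\to\infty$ uniformly in $s\in[0,t]$ (using $\rho\in Y_U$ together with parabolic regularity for \eqref{eq:bwasy}), both the compensator and the bracket tend to zero as $L\to\infty$, yielding the required uniform tail control and completing the proof.
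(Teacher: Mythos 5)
Your reduction of the two tail statements to one via the deterministic identity $\pi_t^N([0,\infty))-\hat\pi_t^N((-\infty,0])=\bar\eta_t^N(0)/N$, forced by the constraint defining $\chi_U$, is correct and a nice simplification over the paper, which handles the two cases symmetrically; the truncation step likewise matches the paper's reduction to a uniform tail bound on $\pi_t^N([K,\infty))$. The gap is in the mechanism for that tail bound. Your sharp-cutoff flux argument places in the compensator of $N^{-1}J^N_{NL,NL+1}(t)$ the term $\int_0^t N\big[\bar\eta_s^N(NL)-\bar\eta_s^N(NL+1)\big]\,ds$, a discrete gradient at a single microscopic bond blown up by the factor $N$. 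Proposition~\ref{thm:bwasy} gives weak convergence against $C_0(\R)$ test functions and does not upgrade to control of single-site expectations, let alone such scaled gradients: that would require a local-equilibrium estimate of one-block/two-blocks type, which is precisely the machinery the G\"artner-transformation route to this hydrodynamic limit is designed to avoid. Your proposed fix (spatial averaging over $[L-\eta,L+\eta]$ plus parabolic regularity of $\rho$) does not close the gap either, since after telescoping the averaged gradient you are still left with single-site indicators $\bar\eta_s^N\big(N(L\mp\eta)\big)$ at the window edges, again not controllable without local equilibrium.

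The paper avoids the problem by choosing a smooth monotone cutoff $\fa_K$ before touching the dynamics. Since $\fa_K'\ge 0$ and $\e<1$, the drift $N^2\bar L_{\e(N),U}\lan\pi^N,\fa_K\ran$ is bounded above by its symmetric version with $\e=1$, and a summation by parts, using $c_+(x,\bar\eta)-c_-(x,\bar\eta)=\bar\eta(x)-\bar\eta(x+1)$, converts the gradient into a discrete Laplacian, yielding the deterministic estimate $\|\fa_K''\|_\infty\cdot|\text{supp }\fa_K''|\le\|\fa_1''\|_\infty/(2K)$, uniform in $N$ and in the configuration; the martingale bracket is bounded analogously by $t\|\fa_1'\|_\infty^2/(2KN)$. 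This purely deterministic $O(1/K)$ drift bound, requiring no single-site information whatsoever, is the cancellation your sharp-cutoff approach forgoes, and it is the essential idea missing from your argument.
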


\begin{proof}
We easily see that \eqref{eq:4.2-a} 
holds for a step function $g=1_{[a,b]}$ with $-\infty<a<b<\infty$,
by approximating such $g$ by a sequence of continuous functions
$g_n \in C_0(\R)$ noting that $0\le \bar{\eta}_t(x), \rho(t,v)\le 1$.
Moreover, Remark \ref{rem:4.1} implies that both 
$\int_{-\infty}^{-K}(1-\rho(t,v))dv$ and
$\int_K^\infty \rho(t,v)dv$
are arbitrarily small for large enough $K>0$.
Thus, to prove \eqref{eq:4.1a-1} and
\eqref{eq:4.1a-2}, it is sufficient to show that for every $\de>0$
there exists $K>0$ such that
\begin{equation}  \label{eq:4.1a-3}
\lim_{N \to \infty} \bar{\mathbb{P}}_{\nu^N}^N[\pi_t^N([K,\infty))
>\de]=0,
\end{equation}
and 
\begin{equation}  \label{eq:4.1a-4}
\lim_{N \to \infty} \bar{\mathbb{P}}_{\nu^N}^N[\hat{\pi}_t^N((-\infty,-K])
>\de]=0,
\end{equation}
respectively.  We prove \eqref{eq:4.1a-3} only, since the proof of
\eqref{eq:4.1a-4} is similar.  To this end, take a function 
$\fa_1\in C_b^2(\R)$ satisfying that $\fa_1' \ge 0$, $\fa_1(v)=1$
for $v\ge 1$ and $\fa_1(v)=0$ for $v\le 1/2$, and set $\fa_K(v):= \fa_1(v/K)$
for $K>0$.  Then,
$$
m_t^N(\fa_K) := \lan \pi_t^N,\fa_K\ran - \lan \pi_0^N,\fa_K\ran 
 - \int_0^t N^2 \bar{L}_{\e(N),U} \lan \pi_s^N,\fa_K\ran ds
$$
is a martingale and the following two bounds:
\begin{align} \label{eq:4.11-b}
& N^2 \bar{L}_{\e(N),U} \lan \pi^N,\fa_K\ran \le 
\|\fa_K''\|_\infty\times |\text{supp }\fa_K''| \le \|\fa_1''\|_\infty/2K,\\
\intertext{and}
& E[m_t^N(\fa_K)^2] \le t \, \|\fa_K'\|_\infty^2
\times |\text{supp }\fa_K'|/N \le t \, \|\fa_1'\|_\infty^2/2KN,
  \label{eq:4.13-b}
\end{align}
hold, where $\lan \pi,\fa\ran = \int_\R \fa(v) \pi(dv)$ and 
$|\text{supp }\fa|$ stands for the Lebesgue measure of the support of
$\fa$.  Indeed a similar computation is made in the proof 
of Proposition \ref{prop:x} below.  
Actually, because of the difference of the generators,
the first sums in \eqref{eq:5.7-a} and \eqref{eq:5.m^N} below should be
taken over $x\in\Z$ rather than $x\in\N$ and the second terms 
do not appear in the present setting.
Moreover, since $\fa_K'\ge 0$, the first sum in \eqref{eq:5.7-a} is bounded
from above by the same sum taken $\e=1$ (because $\e<1$).  However, since
$c_+(x,\bar{\eta}) - c_-(x,\bar{\eta}) = \bar{\eta}(x)-\bar{\eta}(x+1)$,
the bound \eqref{eq:4.11-b} follows by the summation by parts.
Accordingly, we have
\begin{align*}
\pi_t^N([K,\infty)) \le \lan \pi_t^N,\fa_K\ran 
 \le \lan \pi_0^N,\fa_K\ran + t \, \|\fa_1''\|_\infty/2K  + |m_t^N(\fa_K)|.
\end{align*}
Therefore, the condition \eqref{eq:4.1a-1} for $t=0$ controls the behavior
of $\lan \pi_0^N,\fa_K\ran$ and proves \eqref{eq:4.1a-3} with the help of
\eqref{eq:4.13-b}.
\end{proof}

\begin{rem}  \label{rem:4.2}
{\rm (1)} The condition \eqref{eq:4.1a-1} is equivalent to \eqref{eq:4.2-a}
with $g=1_{[0,\infty)}$.  \\
{\rm (2)} The condition \eqref{eq:4.2-a} can be rewritten into an equivalent
form \eqref{eq:4.2-a}$'$, which is \eqref{eq:4.2-a} with $\pi_t^N, \rho(t,v)$
replaced by $\hat{\pi}_t^N, 1-\rho(t,v)$, respectively, and for all
$g\in C_0(\R)$.  Then the condition \eqref{eq:4.1a-2} is equivalent to
\eqref{eq:4.2-a}$'$ with $g=1_{(-\infty,0]}$.
\end{rem}

\subsection{Correspondence between two function spaces $X_U$ and $Y_U$}

We study the relationship between two function spaces $X_U$ and $Y_U$.
To each $\psi\in X_U$, one can associate an element $\rho \in Y_U$ in the
following manner:  First consider a curve $\mathcal{C}_\psi^{(1)}
= \{(u,w); w=\psi(u)\}$  in the first quadrant in the plane, and then 
define a new curve $\mathcal{C}_\psi^{(2)}$ in the upper half plane by 
shifting each point
$(u,w)$ in $\mathcal{C}_\psi^{(1)}$ to $(u-\psi(u),w)$.  The tilt of the 
curve $\mathcal{C}_\psi^{(2)}$ with reversed sign defines the function
$\rho \in Y_U$.  More precisely, for $\psi \in X_U$, we define the function
$G_{\psi}:\R_+^\circ \to \R$ as
\begin{equation}\label{eq:gpsi}
G_{\psi}(u):=u-\psi(u).
\end{equation}
By the definition of $X_U$, $G_{\psi}$ is a monotone function and furthermore
a bijection from $\R_+^\circ$ to $\R$. So, there exists an inverse function of
$G_{\psi}$. We define a function $\Phi_U(\psi):\R \to (0,1)$ as 
$\Phi_U(\psi)(v)=\frac{-\psi^{\prime}(G_{\psi}^{-1}(v))}{1-\psi^{\prime}
(G_{\psi}^{-1}(v))}$ for $v\in \R$. Then, we can easily see that 
$\Phi_U(\psi) \in Y_U$.  In fact, we can show the following proposition.

\begin{prop}
The map $\Phi_U$ defines a one-to-one correspondence between $X_U$ and $Y_U$.  
\end{prop}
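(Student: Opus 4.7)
The plan is to show $\Phi_U$ is well-defined into $Y_U$, construct an explicit inverse $\Psi_U: Y_U \to X_U$, and verify they are mutually inverse via a single change-of-variables identity.

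First I would check $\Phi_U(\psi) \in Y_U$ for $\psi \in X_U$. Since $\psi'<0$, the derivative $G_\psi'(u) = 1-\psi'(u)$ exceeds $1$, and combined with $\psi(0^+)=\infty$, $\psi(\infty)=0$ this makes $G_\psi$ a $C^1$ diffeomorphism from $\R_+^\circ$ onto $\R$. The formula then defines $\Phi_U(\psi)$ as a continuous, $(0,1)$-valued function on $\R$. The crucial observation is that the origin $v=0$ corresponds to the unique fixed point $u_0$ of $\psi$ (the solution of $\psi(u_0)=u_0$). Using the change of variables $v=G_\psi(u)$, $dv=(1-\psi'(u))du$, both tail integrals
\begin{equation*}
\int_0^\infty \Phi_U(\psi)(v)\,dv = \int_{u_0}^\infty (-\psi'(u))\,du,
\qquad \int_{-\infty}^0 (1-\Phi_U(\psi)(v))\,dv = \int_0^{u_0} du,
\end{equation*}
collapse to $u_0$, establishing both the equality and the finiteness required for $Y_U$.

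Next I would define the candidate inverse. For $\rho \in Y_U$, set $H_\rho(v) := \int_{-\infty}^v (1-\rho(v'))dv'$, which is well-defined on $\R$ by the integrability at $-\infty$. Its derivative $1-\rho \in (0,1)$ is continuous and strictly positive, so $H_\rho$ is a $C^1$ increasing bijection $\R \to (0,\infty)$ (the upper limit being infinite because the convergence of $\int_0^\infty \rho$ forces $\int_0^\infty(1-\rho)\,dv'=\infty$). I would then set $\Psi_U(\rho)(u) := \int_{H_\rho^{-1}(u)}^\infty \rho(v')\,dv'$. The chain rule gives $\Psi_U(\rho)'(u) = -\rho(v)/(1-\rho(v))<0$ with $v=H_\rho^{-1}(u)$; the boundary behaviour $\Psi_U(\rho)(0^+)=\infty$ follows from the divergence of $\int_{-\infty}^0 \rho(v')\,dv'$, and $\Psi_U(\rho)(\infty)=0$ is automatic. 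Hence $\Psi_U(\rho)\in X_U$.

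To close the proof I would verify that both compositions reduce to the identity. Given $\psi \in X_U$ and $\rho=\Phi_U(\psi)$, the identity $(1-\rho(v))dv=du$ along $v=G_\psi(u)$ shows $H_\rho = G_\psi^{-1}$, whence
\begin{equation*}
\Psi_U(\rho)(u) = \int_{G_\psi(u)}^\infty \rho(v')\,dv' = \int_u^\infty (-\psi'(u'))\,du' = \psi(u),
\end{equation*}
and the reverse composition follows by the symmetric calculation. The main obstacle is not computational but bookkeeping: one must make sure the distinguished base point $u_0$ in the $X_U$ picture is correctly identified with the origin $v=0$ in the $Y_U$ picture. This matching is precisely what the equality of the two integrals in the definition of $Y_U$ encodes, and it is what makes $\Phi_U$ an exact bijection rather than a one-parameter family of correspondences.
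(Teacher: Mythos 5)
Your proof is correct and follows essentially the same route as the paper: your $H_\rho$ is the paper's $\zeta^-_\rho$ and your $\Psi_U(\rho)(u) = \int_{H_\rho^{-1}(u)}^\infty \rho\,dv'$ is exactly the paper's $\zeta^+_\rho\bigl((\zeta^-_\rho)^{-1}(u)\bigr)$, after which both arguments verify that the compositions give the identity. You fill in some details the paper leaves to the reader (the fixed point $u_0$ with $\psi(u_0)=u_0$ landing at the origin $v=0$, which makes both tail integrals in the definition of $Y_U$ collapse to $u_0$, and the divergence arguments behind $\Psi_U(\rho)(0^+)=\infty$), but the underlying strategy is the same.
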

\begin{proof}
The inverse map $\Psi_U$ of $\Phi_U$ can be constructed as follows.
For $\rho \in Y_U$, we define two functions $\zeta^-_{\rho}:\R \to \R_+^\circ$ 
and $\zeta^+_{\rho}:\R \to \R_+^\circ$ as
\begin{align} \label{eq:4.2}
\zeta^-_{\rho}(v):=\int^v_{-\infty}(1-\rho(v'))dv' 
\quad \text{ and } \quad
\zeta^+_{\rho}(v):=\int^{\infty}_v \rho(v')dv', \quad v\in\R.
\end{align}
Note that these functions are macroscopic correspondences to those determined
by \eqref{eq:4.1-a}.  By the definition of $Y_U$, $\zeta^-_{\rho}$ and 
$\zeta^+_{\rho}$ are continuously
differentiable monotone functions. Moreover, they are bijections from 
$\R$ to $\R_+^\circ$. So, there exists an inverse function of
 $\zeta^-_{\rho}$. We define a function $\Psi_U(\rho):\R_+^\circ \to 
 \R_+^\circ$ as $\Psi_U(\rho)(u)=
\zeta^+_{\rho}\big((\zeta^-_{\rho})^{-1}(u)\big)$ for
$u \in \R_+^\circ$. Then, we can easily see that $\Psi_U(\rho) \in X_U$. 
Furthermore, $\Psi_U \circ \Phi_U = id_{X_U}$ and $\Phi_U \circ \Psi_U 
= id_{Y_U}$ hold, which concludes the proof.
\end{proof}

\subsection{Proof of Theorem \ref{thm:bthm}}

\noindent
{\it Step 1}.  
We will show that Theorem \ref{thm:bthm} for the process $p_t (\equiv p_t^N)$
follows from
Proposition \ref{thm:bwasy} for the process $\bar{\eta}_t^N$.  To this end, 
we first see that the conditions \eqref{eq:4.ini}, \eqref{eq:4.1a-1}
and \eqref{eq:4.1a-2} at $t=0$ are reduced from the condition \eqref{eq:2.ini}
if we define $\bar{\eta}$ and $\rho_0$ by $\bar{\eta}=\bar{\eta}^p$ and 
$\rho_0=\Phi_U(\psi_0)$, respectively.  

Take $g\in C_b^1(\R)$ satisfying $g(v)=0$ for $v\le -K$ and $g(v)=c$
for $v\ge K$ with some $K>0$ and $c\in\R$.  We will show the condition
\eqref{eq:4.ini} for such $g$; recall Remark \ref{rem:4.2}-(1) for
$t=0$.  For a given $0<\de<1$, determine $u_0, u_1>0$ in such a manner that
$u_0 = \psi_0^{-1}(K+2)\wedge 1$ and $u_1 = 
\psi_0^{-1}(\de)$, respectively.  Now we assume the condition
\begin{equation} \label{eq:4.b-1}
\sup_{u\in[u_0,u_1]} |\tilde{\psi}_p^N(u)-\psi_0(u)| \le \de,
\end{equation}
for $\tilde{\psi}_p^N$.  Then, under this condition, we have that
\begin{equation} \label{eq:4.b-2}
\tilde{\psi}_p^N(u), \psi_0(u) \ge K+1, \quad u \in (0,u_0],
\end{equation}
since $0<\de<1$ and both functions are non-increasing in $u$, and
\begin{equation} \label{eq:4.b-3}
\sharp\{i; \frac{p_i}N > u_0\} = N \tilde{\psi}_p^N(u_0) 
\le N(\psi_0(u_0)+1).
\end{equation}
Thus, under \eqref{eq:4.b-1}, we have that
\begin{align}  \label{eq:4.b-4}
\int_{-\infty}^\infty g(v) \pi_0^N(dv)
& = \frac1N \sum_{x\in\Z} \bar{\eta}(x) g\left( \frac{x}N \right) 
= \frac1N \sum_{i\in\N} g\left( \frac{p_i-i+1}N\right)   \\
& = \frac1N \sum_{i\in\N} g\left( \frac{p_i}N - 
  \tilde{\psi}_p^N(\frac{p_i}N) -\frac{d_i(p)}N  \right) \notag  \\
& = \frac1N \sum_{i\in\N:\frac{p_i}N > u_0} g\left( \frac{p_i}N - 
  \tilde{\psi}_p^N(\frac{p_i}N) -\frac{d_i(p)}N \right) \notag  \\
& = \frac1N \sum_{i\in\N:\frac{p_i}N > u_0} g\left( \frac{p_i}N - 
  \psi_0(\frac{p_i}N) \right) + R^{N,\de,1},
  \notag
\end{align}
where $d_i(p) := \sharp\{j\le i-1; p_j=p_i\}$ is the discrepancy in the
graph of Young diagram $\psi_p(u)$ at $u=p_i$, and 
the error term $R^{N,\de,1}$ satisfies that $|R^{N,\de,1}|\le C_1\de$
with $C_1>0$.  Indeed, the second equality in \eqref{eq:4.b-4} follows
from the fact that $\{x\in\Z;\bar{\eta}(x)=1\} = \{p_i-i+1; i\in\N\}$,
the third from  $\tilde{\psi}^N_p(p_i/N) = \psi_p(p_i)/N
= (i-1-d_i(p))/N$ and the fourth from
\eqref{eq:4.b-2} since $p_i/N\le u_0$ implies that $p_i/N -
\tilde{\psi}_p^N(p_i/N) \le u_0 - (K+1)\le -K$.  The term $R^{N,\de,1}$
in the last line is defined by
$$
R^{N,\de,1} = \frac1N \sum_{i\in\N:\frac{p_i}N > u_0} 
\left\{ g\left( \frac{p_i}N - \tilde{\psi}_N^p(\frac{p_i}N) 
- \frac{d_i(p)}N  \right)
- g\left( \frac{p_i}N - \psi_0(\frac{p_i}N) \right) \right\}
$$
and admits the bound:
\begin{align*}
|R^{N,\de,1}| & \le \frac{\| g'\|_\infty}N \sum_{i\in\N:\frac{p_i}N > u_0} 
\left\{ \left| \tilde{\psi}_N^p\left(\frac{p_i}N \right) 
  - \psi_0\left(\frac{p_i}N \right) \right| + \frac{d_i(p)}N  \right\} \\
& \le \| g'\|_\infty\cdot 4\de(\psi_0(u_0)+1),
\end{align*}
since the first summand in the above sum is bounded by
$\de$ if $u_0\le p_i/N\le u_1$ under the condition \eqref{eq:4.b-1} and
is bounded by $2\de$ if $p_i/N\ge u_1$ by noting that 
$0\le \tilde{\psi}_N^p(u), \psi_0(u)\le 2\de$ for $u\ge u_1$
which follows from the monotonicity of these functions,
and its second summand is bounded by $\tilde{\psi}_p^N(p_i/N-)
-\tilde{\psi}_p^N(p_i/N)$ which is further bounded by $2\de$ from 
\eqref{eq:4.b-1} recalling the continuity of $\psi_0$; we have also
used \eqref{eq:4.b-3}.  We can further rewrite the sum in the last term
of \eqref{eq:4.b-4} as
\begin{align*}
\frac1N \sum_{i\in\N:\frac{p_i}N > u_0} & g\left( \frac{p_i}N - 
  \psi_0(\frac{p_i}N) \right) 
= \frac1N \sum_{i \in \N}(g \circ G_{\psi_0})\left(\frac{p_i}{N}\right) \\
& = \frac1N \sum_{i \in \N} \int^{\frac{p_i}{N}}_0 
  (g \circ G_{\psi_0})^{\prime}(u)du
= \int^{\infty}_0 (g \circ G_{\psi_0})^{\prime}(u) \tilde{\psi}_{p}^N(u)du.
\end{align*}
Note that, since $g \circ G_{\psi_0}(u) = g(u-\psi_0(u))=0$ if 
$u\in (0,u_0]$, we have dropped the condition $p_i/N> u_0$ from the summand
of the above sums, and, by the same reason, we can replace the region of
the integral in the last line from $[0,\infty)$ to $[u_0,\infty)$.
Consider the error $R^{N,\de,2}$ defined by
$$
R^{N,\de,2} = \int^{\infty}_0 (g \circ G_{\psi_0})^{\prime}(u) 
\left\{\tilde{\psi}_{p}^N(u) -\psi_0(u)\right\} du,
$$
which can be bounded as
$$
|R^{N,\de,2}| \le 
2\de \int^{\tilde{K}}_{u_0} |(g \circ G_{\psi_0})^{\prime}(u)| du
= C_2\de.
$$
where $\tilde{K}$ is determined in such a manner that $(g \circ G_{\psi_0})'
(v)=0$ for $v \ge \tilde{K}$. Furthermore, by the integration by parts 
formula, we have that
\begin{align*}
\int^{\infty}_0 (g \circ G_{\psi_0})^{\prime}(u)\psi_0(u)du
& =-\int^{\infty}_0 (g \circ G_{\psi_0})(u)\psi_0^{\prime}(u)du \\
& = - \int^{\infty}_{-\infty} g(v)\psi_0^{\prime}(G_{\psi_0}^{-1}(v))
\frac{1}{1-\psi_0^{\prime}(G_{\psi_0}^{-1}(v))}dv \\
&=\int^{\infty}_{-\infty} g(v)\Phi_U(\psi_0)(v)dv=\int^{\infty}_{-\infty}
g(v)\rho_0(v)dv.
\end{align*}
Therefore, under the condition \eqref{eq:4.b-1}, we have shown that
\begin{equation*}
\left|\int_{-\infty}^\infty g(v) \pi_0^N(dv)
- \int_{-\infty}^\infty g(v) \rho_0(v)dv \right| \le (C_1+C_2)\de.
\end{equation*}
This implies the condition \eqref{eq:4.ini} for $\pi_0^N$ and
$g\in C_b^1(\R)$ satisfying $g(v)=0$ for $v\le -K$ and $g(v)=c$
for $v\ge K$ with some $K>0$ and $c\in\R$.

The same condition \eqref{eq:4.ini} with $\pi_0^N, \rho_0$ replaced by
$\hat{\pi}_0^N, 1-\rho_0$, respectively, and
$g\in C_b^1(\R)$ satisfying $g(v)=0$ for $v\ge K$ and $g(v)=c$
for $v\le -K$ with some $K>0$ and $c\in\R$ can be shown by symmetry; recall
Remark \ref{rem:4.2}-(2) for $t=0$.  Indeed, for each $p \in \mathcal{P}$, 
we denote by $\check{p} = (\check{p}_i)_{i\in\N}$ the mirror image of the
Young diagram $p$ with the axis of symmetry $\{y=x\}$ in the plane,
i.e.\ $\check{p}_i=\sharp\{j;p_j \ge i\}$.  Similarly, we denote by 
$\check{\psi_0}$ the mirror image of the curve $\psi_0$ with the axis of 
symmetry $\{y=x\}$, i.e.\ $\check{\psi_0}(u):=\psi_0^{-1}(u)$.
Then, the condition \eqref{eq:2.ini} with $\tilde{\psi}_{p}^N$, $\psi_0$ 
replaced by $\tilde{\psi}_{\check{p}}^N$, $\check{\psi}_0$, respectively,
is reduced from \eqref{eq:2.ini} itself. Therefore, if we denote by $\check{\pi}_0^N$ the scaled empirical measure of the configuration $\bar{\eta}^{\check{p}}$ and $\check{\rho}_0$ the function associated with $\check{\psi}_0$ by the one-to-one map constructed in Subsection 4.2, namely
$\check{\pi}_0^N(dv) = \frac1N \sum_{x\in \Z}\bar{\eta}^{\check{p}}(x)\de_{\frac{x}{N}}(dv)$ and $\check{\rho}_0=\Phi_U(\check{\psi}_0)$, then 
we see that the condition \eqref{eq:4.ini} with $\pi_0^N, \rho_0$
replaced by $\check{\pi}_0^N, \check{\rho}_0$, respectively,
holds for every $\de>0$ and $g\in C_b^1(\R)$ satisfying $g(v)=0$ for 
$v\le -K$ and $g(v)=c$ for $v\ge K$ with some $K>0$ and $c\in\R$ by the 
above mentioned argument. However, since we easily see the relations: 
$\bar{\eta}^{\check{p}}(x)=1-\bar{\eta}^p(-x)$ and 
$\check{\rho}_0(u)=1-\rho_0(-u)$, the condition \eqref{eq:4.ini}
with $\pi_0^N, \rho_0$ replaced by $\hat{\pi}_0^N, 1-\rho_0$, respectively,
is shown for $g\in C_b^1(\R)$ satisfying $g(v)=0$ for 
$v\ge K$ and $g(v)=c$ for $v\le -K$.

\noindent
{\it Step 2}.  
In order to complete the proof of the theorem, it is now sufficient
to show that \eqref{eq:4.2-a} in Proposition \ref{thm:bwasy} together with
the assertions in
Lemma \ref{lem:4.3} implies \eqref{eq:2.t} with $\psi_t =\Phi_U(\rho_t)$.
The non-linear equation \eqref{eq:bhydro} for $\psi_t$ follows from
\eqref{eq:bwasy} for $\rho_t$. 

Take $f\in C_0(\R_+^\circ)$ and $t>0$ arbitrarily and fix them throughout 
the rest of the proof.  Then we have that
\begin{equation} \label{eq:4.a-0}
\int^{\infty}_0 f(u) \tilde{\psi}_{p_t}^N(u)du
= \frac1N \sum_{x\in\Z} F\left(\frac{\zeta^-_t(x)}N\right)\bar{\eta}_t(x),
\end{equation}
where $F(u) = \int_0^u f(u')du'$ and $\zeta^-_t(x) = 
\zeta^-_{\bar{\eta}_t}(x)$
defined by \eqref{eq:4.1-a}.  For a given $\de>0$, take $K>0$ such that
\begin{equation} \label{eq:4.a-3}
\int_{-\infty}^{-K}(1-\rho(t,v))dv < \de/6,
\qquad
\int^{\infty}_K \rho(t,v)dv < \de/6,
\end{equation}
and the conditions \eqref{eq:4.1a-3} and \eqref{eq:4.1a-4} hold with
$\de$ replaced by $\de/3$, recall the proof of Lemma \ref{lem:4.3}.

Now let us prove that
\begin{equation} \label{eq:4.b}
\lim_{N \to \infty} \mathbb{P}_{\nu^N}^N[
\sup_{x\in \Z: |x/N-v|\le \th} |\frac{\zeta^-_t(x)}N - \int_{-\infty}^v
(1-\rho(t,v'))dv'| >\de] =0
\end{equation}
holds for every $0<\th<\de/3$ and $v\in \mathcal{V}_{K,\th} := \{v\in\R;
|v|\le K+1, v\in \th\Z\}$.  In fact, since $\zeta^-_t(x)$ is 
non-decreasing in $x$, we have that
\begin{equation} \label{eq:4.c}
\hat{\pi}_t^N((-\infty,v-\th]) \le \frac{\zeta^-_t(x)}N 
= \hat{\pi}_t^N((-\infty,x/N]) \le \hat{\pi}_t^N((-\infty,v+\th])
\end{equation}
for $x\in \Z$ such that $|x/N-v|\le \th$.  However, from \eqref{eq:4.1a-4}
and \eqref{eq:4.2-a} with $g= 1_{[-K,v\pm \th]}$ and $\de$ replaced by 
$\de/3$, we have that
\begin{equation} \label{eq:4.d}
\lim_{N \to \infty} \mathbb{P}_{\nu^N}^N[
|\hat{\pi}_t^N((-\infty,v\pm\th]) - \int_{-\infty}^{v\pm\th}
(1-\rho(t,v'))dv'| >2\de/3] =0.
\end{equation}
Moreover, since 
$
|\int_{-\infty}^{v\pm\th}(1-\rho(t,v'))dv' - \int_{-\infty}^{v}
(1-\rho(t,v'))dv'| \le \th,
$
if $0<\th<\de/3$, \eqref{eq:4.c} and \eqref{eq:4.d} imply \eqref{eq:4.b}.
Since $\|F'\|_\infty=\|f\|_\infty<\infty$, \eqref{eq:4.b} further shows that
\begin{equation} \label{eq:4.e}
\lim_{N \to \infty} \mathbb{P}_{\nu^N}^N[
\sup_{x\in \Z: |x/N-v|\le \th} \left|F\left(\frac{\zeta^-_t(x)}N\right) - 
F\left(\int_{-\infty}^v (1-\rho(t,v'))dv'\right)\right| >\de\|f\|_\infty] =0
\end{equation}
for every $v\in \mathcal{V}_{K,\th}$ if $0<\th<\de/3$.

We now return to the formula \eqref{eq:4.a-0} and divide it as
\begin{align*}
\int^{\infty}_0 f(u) \tilde{\psi}_{p_t}^N(u)du
 =: I_1^N + I_2^N + I_3^N,
\end{align*}
where $I_1^N$, $I_2^N$ and $I_3^N$ are defined as the sums in the right
hand side of \eqref{eq:4.a-0} restricted for $x\le -KN$, $-KN<x<KN$
and $x\ge KN$, respectively.
For the first term $I_1^N$, since $f\in C_0(\R_+^\circ)$, we see that
$f(u)=0$ so that $F(u)=0$ for $u\in[0,u_0]$ with some $u_0>0$.
Therefore, choosing $\de>0$ such that $\de/3<u_0$, \eqref{eq:4.1a-4}
with $\de$ replaced by $\de/3$ implies that
\begin{equation} \label{eq:4.f}
\lim_{N \to \infty} \mathbb{P}_{\nu^N}^N[ I_1^N =0] =1.
\end{equation}
For the second term $I_2^N$, by \eqref{eq:4.e}, we can show that
\begin{equation*}
\lim_{N \to \infty} \mathbb{P}_{\nu^N}^N[ |I_2^N - \tilde{I}_2^N|>\de] =0,
\end{equation*}
where, assuming $K/\th\in \Z$ for simplicity,
$$
\tilde{I}_2^N = \frac1N \sum_{k=-K/\th}^{K/\th-1}
F\big(\int_{-\infty}^{k\th} (1-\rho(t,v'))dv'\big)
\sum_{k\th \le x/N<(k+1)\th} \bar{\eta}_t(x).
$$
However, by applying \eqref{eq:4.2-a} with $g=1_{[k\th,(k+1)\th)}$ again,
we have that
\begin{equation*}
\lim_{N \to \infty} \mathbb{P}_{\nu^N}^N[ |\tilde{I}_2^N 
- \bar{I}_2^\th|>\de] =0,
\end{equation*}
where
$$
\bar{I}_2^\th = \sum_{k=-K/\th}^{K/\th-1}
F\big(\int_{-\infty}^{k\th} (1-\rho(t,v'))dv'\big)
\int_{k\th}^{(k+1)\th} \rho(t,v')dv'.
$$
By letting $\th\downarrow 0$, $\bar{I}_2^\th$ converges to
$$
I_K = \int_{-K}^K F\big(\int_{-\infty}^v (1-\rho(t,v'))dv'\big)
 \rho(t,v)dv.
$$
For the third term $I_3^N$, since $0\le I_3^N \le \|F\|_\infty 
\pi_t^N([K,\infty))$, we see from \eqref{eq:4.1a-3}
with $\de$ replaced by $\de/3$ that
\begin{equation*}
\lim_{N \to \infty} \mathbb{P}_{\nu^N}^N[ I_3^N > \de\|F\|_\infty/3
] =0.
\end{equation*}

These computations are now summarized into
\begin{equation*}
\lim_{N \to \infty} \mathbb{P}_{\nu^N}^N[|\int^{\infty}_0 
f(u) \tilde{\psi}_{p_t}^N(u)du - I|>\de]=0,
\end{equation*}
where
$$
I = \int_{-\infty}^\infty F\big(\int_{-\infty}^v (1-\rho(t,v'))dv'\big)
 \rho(t,v)dv.
$$
Note that $I_K$ coincides with $\int_{-\infty}^K F\big(\int_{-\infty}^v 
(1-\rho(t,v'))dv'\big)  \rho(t,v)dv$ because of \eqref{eq:4.a-3} recalling
that $\de/3<u_0$ and the integration over $[K,\infty)$ in $v$ can be taken
small enough if $K$ is sufficiently large.  However, by the change of 
variables $w= \zeta^-_{\rho_t}(v)$ and the integration by parts, we have
that
\begin{align*}
I & = \int_{-\infty}^\infty F\big(\zeta^-_{\rho_t}(v)\big) \rho(t,v)dv 
= -\int_{-\infty}^\infty \int_0^{\zeta^-_{\rho_t}(v)}f(u)du \cdot
\frac{d\zeta^+_{\rho_t}}{dv}(v)dv \\
&= -\int_0^\infty \int_0^wf(u)du \cdot
\frac{d\zeta^+_{\rho_t}}{dv}\big( (\zeta^-_{\rho_t})^{-1}(w)\big)
\frac{dv}{dw}dw  \\
& = -\int_0^\infty \int_0^wf(u)du \cdot
\frac{d}{dw}\left( \zeta^+_{\rho_t} \big( (\zeta^-_{\rho_t})^{-1}
 (w)\big)\right)dw  \\
& = \int_0^\infty f(u) \zeta^+_{\rho_t}\big( (\zeta^-_{\rho_t})^{-1}
(u)\big) du = \int_0^\infty f(u) \Psi_U(\rho_t)(u) du.
\end{align*}
This completes the proof of Theorem \ref{thm:bthm}.

\section{Proof of Theorem \ref{thm:fthm}}

This section gives the proof of Theorem \ref{thm:fthm} for the RU-case,
i.e.\ the case corresponding to the restricted uniform statistics.
Similarly to the process $\xi_t$ in the U-case, we consider the particle
numbers (or the gradient of the height function $\psi_{q_t}$) 
$\eta_t = (\eta_t(x))_{x\in \Z_+}$  defined by $\eta_t(x) =
\sharp\{i; q_i(t)=x\} \in \{0,1\}$ for $x\in \N$ and $\eta_t(0)=
\infty$.  Note that only $0$-$1$ height differences are allowed under the
restriction imposed on the Young diagrams $q\in \mathcal{Q}$.
Then $\eta_t$ becomes the weakly asymmetric simple exclusion
process with the stochastic reservoir at $\{0\}$, which provides particles 
into the region $\N$ with rate $\e$ and absorbs them with rate $1$. 
Contrarily to the weakly asymmetric simple exclusion process $\bar{\eta}_t$
on $\Z$ considered in the U-case, $\eta_t$ determines a finite particles'
system on $\N$.

In the RU-case, one does not have a nice transformation for $\eta_t$,
which removes the stochastic reservoir as in the U-case.  We will apply
the Hopf-Cole transformation for $\eta_t$ at the microscopic level, which
linearizes the leading term, and study the boundary behavior of the
transformed process.

\subsection{The process $\eta_t$}

Denote by $\chi_R$ the state space of the process $\eta_t$ defined from
$q_t$:
\begin{displaymath}
\chi_R:=\{\eta \in \{0,1\}^{\N}; \sum_{x \in \N}\eta(x) \ < \ \infty\}.
\end{displaymath}
We have a one-to-one correspondence between $\chi_R$ and $\mathcal{Q}$. 
Indeed, for $\eta \in \chi_R$, we assign $q^{\eta} \in \mathcal{Q}$ by the
following rule:
\begin{displaymath}
q^{\eta}_i= \min \{x \in \Z_+; \sum_{y \ge x+1}\eta(y) \le i-1\},
 \quad i \in \N.
\end{displaymath}
In other words, $\{q_i^\eta\}_{i\in\N}$ is determined by numbering the
set $\{x\in\N; \eta(x)=1\}$ from the right and, if $i$ is larger than
the cardinality of this set, we define $q_i^\eta=0$.
We can show that the map $\eta \to q^{\eta}$ is well-defined and also it 
is a bijection from $\chi_R$ to $\mathcal{Q}$. So we denote its inverse map
by $q \to \eta^q$. 

We now consider the Markov process $\eta_t$ on $\chi_R$ with the generator $\bar{L}_{\e,R}$ acting on functions $f:\chi_R \to \R$ as
$$
\bar{L}_{\e,R}f(\eta)= \bar{L}_{\e,R}^i f(\eta) +\bar{L}_{\e,R}^bf(\eta),
$$
where
\begin{align*}
\bar{L}_{\e,R}^if(\eta) &=\sum_{x \in \N} 
\big[\e c_+(x,\eta) + c_-(x,\eta) \big]\{f(\eta^{x,x+1})-f(\eta)\}  \\
\intertext{and}
\bar{L}_{\e,R}^bf(\eta)&= \big[\e 1_{\{\eta(1)=0\}}
+1_{\{\eta(1)=1\}} \big]\{f(\eta^{1})-f(\eta)\}
\end{align*}
are the interior and boundary terms of the generator, respectively,
$c_+(x,\eta), c_-(x,\eta)$ and $\eta^{x,y}$ are defined by 
\eqref{eq:4.c-jump}, \eqref{eq:4.eta} with $\bar{\eta}$ replaced by $\eta$,
respectively, and
\begin{displaymath}
 \eta^{1}(z) = \begin{cases}
	\eta(z)  & \text{if $z \neq 1$}, \\
	1-\eta(1)  & \text{if $z=1$}. 
\end{cases} 
\end{displaymath}
The following lemma is easy so that the proof is omitted.

\begin{lem}
Two processes $\{q_t\}_{t\ge 0}$ and $\{q^{\eta_t}\}_{t\ge 0}$ have the same
distributions on $D(\R_+,\mathcal{Q})$.
\end{lem}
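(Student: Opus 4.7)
The plan is to verify that the bijection $\eta \mapsto q^\eta$ intertwines the two generators, so that the two Markov processes are simply two representations of the same dynamics. Given any bounded function $f:\mathcal{Q}\to\R$, set $\tilde f(\eta):= f(q^\eta)$ for $\eta\in\chi_R$. It suffices to establish the intertwining identity
\begin{equation*}
\bar L_{\e,R} \tilde f(\eta) \;=\; (L_{\e,R} f)(q^\eta), \qquad \eta\in\chi_R,
\end{equation*}
since once this holds, $\{q^{\eta_t}\}$ solves the martingale problem for $L_{\e,R}$ starting from $q^{\eta_0}$, and uniqueness of the martingale problem for the (finite-activity) jump generator $L_{\e,R}$ on $\mathcal{Q}$ forces the two laws on $D(\R_+,\mathcal{Q})$ to coincide.

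First I would record the easy part: the correspondence $\eta\leftrightarrow q$ is a bijection between $\chi_R$ and $\mathcal{Q}$, the map $\eta\mapsto q^\eta$ sends the bond flip $\eta\to\eta^{x,x+1}$ (with $\eta(x)=1$, $\eta(x+1)=0$) to $q\to q^{i,+}$ where $i$ is the index with $q_i=x$, the flip with $\eta(x)=0,\eta(x+1)=1$ to $q\to q^{i,-}$ where $q_i=x+1$, and the boundary flip $\eta^1$ either creates a new particle (index $\frak{n}(q)+1$) or annihilates the particle with $q_{\frak n(q)}=1$.

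The core of the argument is a bond-by-bond match of rates. For the interior generator $\bar L_{\e,R}^i$: the flip at $x$ with rate $\e$ requires $\eta(x)=1,\eta(x+1)=0$, which under the bijection is equivalent to saying the index $i$ with $q_i=x$ satisfies $q_{i-1}\ne q_i+1$, i.e.\ $q_{i-1}>q_i+1$ (using strict decrease), so this exactly reproduces the $\e\,1_{\{q_{i-1}>q_i+1\}}$ term of $L_{\e,R}$. The symmetric computation handles the rate $1$ jump $q_i\to q_i-1$ for an \emph{interior} particle (i.e.\ $q_i>1$), reproducing the indicator $1_{\{q_i>q_{i+1}+1\}}$. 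For the boundary generator $\bar L_{\e,R}^b$: if $\eta(1)=0$, flipping with rate $\e$ creates a particle at $1$, which is the creation move $q^{i,+}$ with $i=\frak n(q)+1$ and $q_i=0$, and the condition $q_{i-1}>q_i+1=1$ holds automatically because $\eta(1)=0$ forces $q_{\frak n(q)}\ge 2$ (or $\frak n(q)=0$, in which case $q_0=\infty$); this matches the $\e$-term of $L_{\e,R}$ exactly. If $\eta(1)=1$, flipping with rate $1$ annihilates the rightmost particle, which is the move $q^{i,-}$ with $q_i=1$, matching the indicator $1_{\{q_i=1\}}$ in $L_{\e,R}$.

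The only subtle point, and I expect it to be the main bookkeeping obstacle, is checking that every term of $L_{\e,R}$ arises exactly once and that no spurious terms appear. Concretely, one must confirm that the two alternatives in the indicator $1_{\{q_i>q_{i+1}+1 \text{ or } q_i=1\}}$ are disjointly realized (interior vs.\ boundary flip in $\bar L_{\e,R}$), that $i=\frak n(q)+1$ in the creation case is correctly produced by a boundary flip rather than an interior one, and that indices $i>\frak n(q)+1$ (where $q_i=q_{i-1}=0$) give no contribution on either side. Once this verification is carried out, the intertwining identity and hence the equality of laws follow.
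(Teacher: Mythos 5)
Your approach is correct and is precisely the one the paper has in mind: the authors omit the proof as "easy," and the intended argument is exactly the generator intertwining $\bar L_{\e,R}\tilde f(\eta)=(L_{\e,R}f)(q^\eta)$ under the bijection $\eta\mapsto q^\eta$, followed by uniqueness of the law of the jump process. Your bond-by-bond bookkeeping is right (in particular the disjointness of $q_i>q_{i+1}+1$ and $q_i=1$, the identification of the boundary flip with the creation/annihilation index $i=\frak n(q)+1$ or $i=\frak n(q)$, and the vanishing of terms with $i>\frak n(q)+1$), so there is no gap.
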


For a probability measure $\nu$ on $\chi_R$ and $N\ge 1$, we denote by 
$\bar{\mathbb{Q}}_{\nu}^N$ the distribution on $D(\R_+,\chi_R)$ of the 
process $\eta_t^N$ with generator $N^2 \bar{L}_{\e(N),R}$
and the initial measure $\nu$, where  $\e(N)$ is defined by \eqref{eq:2.eF}.
Let us define the scaled empirical measures
$\pi_t^N(dv), t\ge 0, v\in\R_+^\circ$ of the process $\eta_t^N$ by the formula
\eqref{eq:4.pi} with $\bar{\eta}_t^N$ replaced by $\eta_t^N$ and
the sum taken over all $x\in \N$ rather than $x\in\Z$.

The hydrodynamic limit for a boundary driven exclusion process is studied
by \cite{ELS-2}.  Our model involves
a weak asymmetry both in dynamics and the boundary condition, and 
furthermore it is defined on an infinite volume $\N$.  
Note that the boundary generator $\bar{L}_{\e,R}^b$ is invariant
under the Bernoulli measure with mean $\rho^\e = \e/(1+\e)$.  This actually
determines the Dirichlet boundary condition at $v=0$ in the limit equation
\eqref{eq:fwasy} stated below, since $\rho^\e$ converges to $1/2$ as 
$\e=\e(N)\uparrow 1$.  The hydrodynamic limit for models in infinite volume
was discussed by several authors including \cite{LM}.  It might be possible
to apply these methods to our model, but we will employ the simplest
way based on the Hopf-Cole transformation.

\subsection{Hopf-Cole transformation}

In this subsection we introduce the microscopic Hopf-Cole transformation
for the process $\eta_t^N$ and formulate Theorem \ref{thm:hopf} on its
hydrodynamic behavior.  Theorem \ref{thm:fthm} will be shown from
Theorem \ref{thm:hopf} in Subsection 5.4.

It is well-known that the (macroscopic) Hopf-Cole transformation:
\begin{displaymath}
\omega(t,u) = \exp \{ \b \int_{u}^{\infty} \rho_t(v)dv \},  
\quad u\in \R_+
\end{displaymath}
allows us to reduce the solution of the viscous Burgers' equation 
\eqref{eq:fwasy} (at least on the whole line $\R$) to that of
the linear diffusion equation \eqref{eq:omega} (on $\R$).
We introduce the corresponding transformation at the microscopic level,
cf.\ \cite{G}.  Namely, we consider the process $\zeta_t^N = 
(\zeta_t^N(x))_{x\in\N}$ defined by $\zeta_t^N(x)
:=\exp \big\{- (\log \e) \sum_{y=x}^{\infty}\eta_t^N(y) \big\}$ 
with $\e=\e(N)$ from the process $\eta_t^N$ and the $C(\R_+)$-valued
process $\tilde{\zeta}^N(t,u), u\in \R_+$ by interpolating
$\tilde{\zeta}^N(t,u):=\zeta_t^N(Nu)$ defined for $u \in \N/N$ 
in such a manner that 
$$
\tilde{\zeta}^N(t,u):=\exp \left[- (\log \e) 
\left\{\sum_{y=[Nu]+1}^{\infty}\eta_t^N(y)+
1_{\{u\ge 1/N\}} ([Nu]+1-Nu)\eta_t^N([Nu])\right\} \right], 
$$ 
for $u\in \R_+$.

\begin{thm}\label{thm:hopf}
Let $(\nu^N)_{N \ge 1}$ be a sequence of probability measures on $\chi_R$
such that 
\begin{equation}\label{eq:5.pi}
\lim_{N \to \infty} \nu^N[|\int^{\infty}_0 g(v) \pi_0^N(dv)
- \int^{\infty}_0 g(v)\rho_0(v)dv|>\de]=0
\end{equation}
holds for every $\de>0$, $g \in C_b(\R_+)$ satisfying $g(v)=c$
for $v\ge K$ with some $K>0$ and $c\in\R$, and some continuous 
function $\rho_0: \R_+ \to [0,1]$ satisfying $\int_0^\infty \rho_0(v)dv
<\infty$.  Then, for every $T>0$, $K>0$ and $\de>0$,
\begin{equation}\label{eq:5.zeta}
\lim_{N \to \infty} \bar{\mathbb{Q}}_{\nu^N}^N[\sup_{0 \le t \le T, 0 \le u \le K}|\tilde{\zeta}^N(t,u)- \omega(t,u)|>\de]=0
\end{equation}
holds, where $\omega(t,u)$ is the unique bounded weak solution of the 
following linear diffusion equation: 
\begin{equation} \label{eq:omega}
\left\{
\begin{aligned}
\partial_t\omega & = \partial_u^2 \omega + \b \partial_u \omega,  
  \quad u\in \R_+,  \\
\omega(0,u) & = \exp \{ \b \int_{u}^{\infty} \rho_0(v)dv \},  \quad u\in 
\R_+,  \\
2 \partial_u \omega(t,0)& + \b \omega(t,0) =0, \quad t>0,\\
\omega(t,\infty) &=1, \quad t>0.
\end{aligned}
\right.
\end{equation}
Namely, for every $t>0$,
\begin{align}  \label{eq:5.w}
\int^{\infty}_0 g(u) \omega(t,u)du =& \int^{\infty}_0 g(u)\omega(0,u)du \\
& + \int^{t}_0 \int^{\infty}_0 \big(g''(u)-\b g'(u)\big)
 \omega(s,u)du ds   \notag
\end{align}
holds for every $g \in C^2_0(\R_+)$ satisfying
$2 g'(0) - \b g(0)=0$ and 
$\lim_{u \to \infty} \omega(t,u)=1$.
\end{thm}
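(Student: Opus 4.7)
The plan is to follow Gärtner's \cite{G} microscopic Hopf-Cole linearization, thereby reducing Theorem \ref{thm:hopf} to a convergence statement for a discrete linear evolution. The crucial observation is that, although $\zeta_t^N(x)$ is a highly non-linear functional of $\eta_t^N$, the asymmetric jump rates and the multiplicative structure $\zeta^N(x)/\zeta^N(x+1) = \e^{-\eta(x)}$ conspire to make $\bar L_{\e,R}$ act linearly on $\zeta^N$. I would first compute $\bar L_{\e,R}\zeta^N(x)$ bond by bond: only the swap on the single bond $(x-1,x)$ changes $\zeta^N(x)$, and a direct enumeration of the four configurations of $(\eta(x-1),\eta(x))$ yields
\begin{equation*}
\bar L_{\e,R}\zeta^N(x) = \e\zeta^N(x-1) - (1+\e)\zeta^N(x) + \zeta^N(x+1), \qquad x\ge 2,
\end{equation*}
independently of the local configuration. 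Evaluating $\bar L_{\e,R}^b\zeta^N(1)$ in the two cases $\eta(1)\in\{0,1\}$ shows that the same identity extends to $x=1$ once one adopts the ghost-value convention $\zeta^N(0):=\zeta^N(2)/\e$. Rewriting this as $[\zeta^N(x+1)-2\zeta^N(x)+\zeta^N(x-1)]+(1-\e)[\zeta^N(x)-\zeta^N(x-1)]$ and invoking $N(1-\e(N))\to\b$ from Lemma \ref{lem:3.2}, we recognize $N^2\bar L_{\e,R}\zeta^N$ as the natural discretization of $\partial_u^2\omega+\b\partial_u\omega$, while a short central-difference expansion confirms that the ghost convention $\zeta^N(0)=\zeta^N(2)/\e$ is a consistent discretization of the Robin condition $2\partial_u\omega(t,0)+\b\omega(t,0)=0$.

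Next I would pass to the limit by standard martingale arguments. Write the Dynkin decomposition
\begin{equation*}
\zeta_t^N(x) = \zeta_0^N(x) + \int_0^t N^2 \bar L_{\e(N),R}\zeta_s^N(x)\,ds + M_t^N(x),
\end{equation*}
test against a function $g\in C_0^2(\R_+)$ satisfying $2g'(0)-\b g(0)=0$, and sum over $x\ge 1$ with weight $1/N$. Two discrete summations by parts, using the ghost-value convention, convert the bulk term into the natural discretization of $\int_0^t\!\int_0^\infty(g''(u)-\b g'(u))\omega(s,u)\,du\,ds$, and the surface contributions produced by the summation by parts at $x=1$ are annihilated by the combined constraint $2g'(0)-\b g(0)=0$ together with $\zeta^N(0)=\zeta^N(2)/\e$. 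A routine quadratic-variation computation shows that the martingale part has second moment of order $O(1/N)$ and therefore vanishes in the limit. Tightness of $\{\tilde\zeta^N\}$ in $C([0,T]\times[0,K])$ follows from the pointwise monotonicity $\zeta^N(x)\le\zeta^N(1)$, a-priori moment bounds on $\zeta_t^N(1)$ derived from the closed linear ODE system satisfied by $E[\zeta^N_t(x)^p]$ together with the initial estimate $\zeta_0^N(1)\approx\exp\{\b\int_0^\infty\rho_0(v)dv\}$ implied by \eqref{eq:5.pi}, the Lipschitz-type bound $|\zeta^N(x+1)-\zeta^N(x)|=O(1/N)\,\zeta^N(x)$, and equicontinuity in $t$ coming from the generator estimate. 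Convergence of the initial data $\tilde\zeta^N(0,u)\to\omega(0,u)$ uniformly on compacts follows by applying \eqref{eq:5.pi} to continuous approximations of $1_{[u,\infty)}$. Uniqueness of bounded weak solutions of \eqref{eq:omega} is a standard energy estimate for a linear parabolic Robin problem.

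The main obstacle lies in the boundary at $u=0$. The whole scheme succeeds only because of the algebraic identity between $\bar L_{\e,R}^b$ and the bulk discrete Laplacian at $x=1$ under the ghost convention $\zeta^N(0)=\zeta^N(2)/\e$, which delivers the correct Robin condition in the limit rather than degenerating into a Dirichlet or Neumann condition. Verifying this compatibility through the summation-by-parts identity at $x=1$ is the \emph{raison d'\^etre} of the Hopf-Cole step here and is precisely the ``rather simple argument'' alluded to in the introduction; it is this cancellation that lets us bypass the one-block and two-block estimates typically needed in a hydrodynamic limit proof.
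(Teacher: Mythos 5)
Your outline --- the microscopic Hopf--Cole linearization, the ghost-value convention $\zeta^N(0)=\zeta^N(2)/\e$, summation by parts, martingale control, and uniqueness via a dual backward problem --- follows the paper's argument. But there is a genuine gap at the boundary, and it is precisely where the real work lies. You assert that the surface term from the summation by parts at $x=1$ is ``annihilated by the combined constraint $2g'(0)-\b g(0)=0$ together with $\zeta^N(0)=\zeta^N(2)/\e$.'' No such algebraic cancellation takes place. The surviving boundary contribution is $N\bigl(g(1/N)\zeta^N_t(2)-g(0)\zeta^N_t(1)\bigr)$; writing $\zeta^N_t(2)=\zeta^N_t(1)e^{(\log\e)\eta^N_t(1)}$ and Taylor-expanding, the Robin constraint $g'(0)=\b g(0)/2$ reduces this to
\[
\b g(0)\,\zeta^N_t(1)\bigl(\tfrac12-\eta^N_t(1)\bigr)+o(1),
\]
and since $\eta^N_t(1)\in\{0,1\}$ the factor $\tfrac12-\eta^N_t(1)=\pm\tfrac12$ never vanishes pointwise. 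It vanishes only after time-averaging, because the time-averaged boundary occupation $\frac1{T_2-T_1}\int_{T_1}^{T_2}\eta^N_s(1)\,ds$ converges to $\tfrac12$ in probability. This ergodic estimate (the paper's Lemma \ref{lem:ergod}) is deduced from the martingale decomposition of the total mass $X^N_t$, whose drift $N^2\bar L_{\e,R}(X^N_s)=N(1-2\eta^N_s(1))-\b(N)(1-\eta^N_s(1))$ carries a factor $N$ that forces the time average of $1-2\eta^N_s(1)$ to be small; combined with the equicontinuity of $t\mapsto\zeta^N_t(1)$ this gives Lemma \ref{lem:5.8}. This ergodic lemma is the ``rather simple argument'' the introduction alludes to --- you misread it as the summation-by-parts step --- and your proposed proof does not close without it.

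A secondary remark: the paper controls $\zeta^N_t(1)=e^{-(\log\e)NX^N_t}$ via a uniform-in-$N$ bound on $\sup_{t\le T}X^N_t$ (Proposition \ref{prop:x}), obtained from a supermartingale inequality for $\langle\pi^N_t,\fa\rangle$ with a monotone cutoff $\fa$. Your proposed route through ``the closed linear ODE system satisfied by $E[\zeta^N_t(x)^p]$'' is unlikely to work as stated: the Hopf--Cole linearization $\bar L_{\e,R}\zeta^N(x)=\e\zeta^N(x-1)-(1+\e)\zeta^N(x)+\zeta^N(x+1)$ depends on the jump rates $\e,1$ matching the multiplicative increment $\e^{\pm1}$ of $\zeta^N(x)$ under a single jump, and $\bar L_{\e,R}$ does not act linearly on $\zeta^p$ for $p\ne1$.
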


The following corollary, which gives the hydrodynamic limit for $\eta_t^N$,
is an immediate consequence of Theorem \ref{thm:hopf} and will be used
in \cite{FS-1}.

\begin{cor}\label{cor:fwasy}
Under the same assumption as Theorem \ref{thm:hopf},
\begin{equation*}
\lim_{N \to \infty} \bar{\mathbb{Q}}_{\nu^N}^N[|\int^{\infty}_0
g(v) \pi_t^N(dv)- \int^{\infty}_0 g(v)\rho(t,v)dv|>\de]=0
\end{equation*}
holds for every $t>0, \de>0$ and $g \in C_0(\R_+^\circ)$, where $\rho(t,u)$ is
the unique classical solution of the following partial differential equation:
\begin{equation}\label{eq:fwasy}
\left\{
\begin{aligned}
\partial_t\rho & = \partial_v^2 \rho + \b \partial_v(\rho(1-\rho)),  
  \quad v\in \R_+,  \\
\rho(0,v) & = \rho_0(v),  \quad v\in \R_+,  \\
\rho(t,0) &=1/2, \quad t>0.
\end{aligned}
\right.
\end{equation}
\end{cor}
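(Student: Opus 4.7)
My plan is to invert the microscopic Hopf--Cole transformation: pass the uniform convergence $\tilde{\zeta}^N \to \omega$ from Theorem \ref{thm:hopf} through a logarithm, use Lemma \ref{lem:3.2} to recover the empirical tail $\pi_t^N((u,\infty))$, and then pair with $g$. That $\rho$ solves \eqref{eq:fwasy} is the standard Hopf--Cole computation: setting $W := \b^{-1}\log\omega$ and $\rho := -\partial_u W$, substitution into \eqref{eq:omega} produces the viscous Burgers' equation, while the Robin condition $2\partial_u\omega(t,0)+\b\omega(t,0)=0$ translates into the Dirichlet condition $\rho(t,0)=1/2$.

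The first step is to take logarithms. The limit profile $\omega$ satisfies $\omega \ge 1$ because $\rho \ge 0$ and $\b > 0$, and is bounded above on $[0,T]\times[0,K]$ by standard estimates for the linear equation \eqref{eq:omega}. Hence on the event that $\tilde{\zeta}^N$ stays uniformly within $1/2$ of $\omega$ on $[0,T]\times[0,K]$ --- an event whose probability tends to $1$ by \eqref{eq:5.zeta} --- the variables $\tilde{\zeta}^N$ take values in a fixed compact subinterval of $(0,\infty)$, and the mean value theorem yields
$$
\sup_{(t,u)\in[0,T]\times[0,K]} \bigl|\log \tilde{\zeta}^N(t,u) - \log \omega(t,u)\bigr| \longrightarrow 0 \quad \text{in probability.}
$$

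By construction $\log \tilde{\zeta}^N(t,u) = -\log \e(N) \cdot A_N(t,u)$ with $A_N(t,u) = N\pi_t^N((u,\infty)) + r_N(t,u)$ and $|r_N|\le 1$; Lemma \ref{lem:3.2} gives $-N\log\e(N) = \b + O(\log N/N)$ and $-\log\e(N) = O(1/N)$, so rearranging
$$
\pi_t^N\bigl((u,\infty)\bigr) = \frac{\log \tilde{\zeta}^N(t,u)}{-N\log\e(N)} + O(1/N) \longrightarrow \frac{\log \omega(t,u)}{\b} = \int_u^\infty \rho(t,v)\,dv,
$$
uniformly in $u\in[0,K]$, in probability.

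For the final step, fix $g \in C_0(\R_+^\circ)$ with support in $[a,b] \subset (0,\infty)$ and take $K=b$. Uniform convergence of the tail function $u\mapsto\pi_t^N((u,\infty))$ to $u\mapsto G(u):=\int_u^\infty \rho(t,v)\,dv$ on $[0,b]$ yields weak convergence of $\pi_t^N$ restricted to any compact subinterval of $(0,\infty)$ toward $\rho(t,v)\,dv$. Concretely, for a smooth approximation $g_n \to g$ uniformly with supports in a common compact subset of $(0,\infty)$, Stieltjes integration by parts gives $\int g_n\,d\pi_t^N = \int g_n'(u)\pi_t^N((u,\infty))\,du \to \int g_n'(u)G(u)\,du = \int g_n\rho\,dv$, and the replacement error from $g_n \to g$ is absorbed by the a priori bound $\sup_N \pi_t^N([a/2,b]) = O(1)$ in probability. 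The main technical hurdle is the logarithm transition in the first step; it relies crucially on $\omega \ge 1$, which makes $\log$ Lipschitz on the relevant range. Everything else is bookkeeping combined with the sharp expansion of $\e(N)$.
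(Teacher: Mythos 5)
Your proposal is correct, and it is essentially the argument the paper has in mind: the paper states the corollary to be an immediate consequence of Theorem~\ref{thm:hopf} without a separate proof, and its own derivation of Theorem~\ref{thm:fthm} in Section~5.4 uses precisely this log-inversion, $\tilde{\psi}^N_{q_t}(u)=\frac{1}{\b}\log\tilde\zeta^N(t,u)+o(1)$ (which equals your $\pi_t^N((u,\infty))$), together with $\omega,\tilde\zeta^N\ge1$ and Lemma~\ref{lem:3.2}. Your extra care about the upper bound on $\omega$ and the Stieltjes integration-by-parts is sound but not strictly needed — since both quantities are $\ge 1$, $|\log a-\log b|\le|a-b|$ already transfers the uniform convergence.
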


\subsection{Proof of Theorem \ref{thm:hopf}}

This subsection proves Theorem \ref{thm:hopf}.

\subsubsection{Uniform estimate on the total mass}

We prepare a proposition which gives a uniform estimate on the scaled total
mass of $\eta_t^N$.  For the proof, the conditions \eqref{eq:5.pi}
with $g\equiv 1$ and  $\int_0^\infty \rho_0(v)dv<\infty$ are essential.

\begin{prop}\label{prop:x}
Denote by $X_t^N$ the process of the total mass of the empirical measure
$\pi_t^N$, namely $X_t^N:=\frac{1}{N}\sum_{x \in \N}\eta_t^N(x)
\big(\equiv \pi_t^N(\R_+^\circ)\big)$.  Then, for every $T>0$, we have that
\begin{equation}\label{eq:5.prop.5.4}
\lim_{\la\to\infty} \sup_{N\ge 1}  \bar{\mathbb{Q}}_{\nu^N}^N
\left[\sup_{0 \le t \le T}X_t^N >\la \right] =0.
\end{equation}
\end{prop}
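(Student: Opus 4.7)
The key observation is that $X_t^N$ evolves only through the boundary site, because the interior exclusion generator preserves the particle number. By Dynkin's formula,
\[
X_t^N = X_0^N + \int_0^t N\bigl[\varepsilon(N)\mathbf{1}_{\{\eta_s^N(1)=0\}} - \mathbf{1}_{\{\eta_s^N(1)=1\}}\bigr]\,ds + M_t^N,
\]
where $M^N$ is a martingale whose predictable quadratic variation is bounded by $(1+\varepsilon(N))t \le 2t$, so Doob's inequality gives $E[\sup_{t\le T}(M_t^N)^2] \le 8T$ uniformly in $N$. The difficulty is that the drift is of order $N$ pointwise but has mean zero under $\mu_R^{\varepsilon(N)}$ (since $P_{\mu_R^\varepsilon}(\eta(1)=1) = \varepsilon/(1+\varepsilon)$), so extracting the cancellation between creations and annihilations at the boundary is essential.

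To this end, I would pass to the microscopic Hopf--Cole transform $\zeta_t^N(x) := \varepsilon(N)^{-\sum_{y\ge x}\eta_t^N(y)}$, $x\ge 1$, which linearizes the generator. Using the pointwise identity $(1-\varepsilon)\zeta(x)\eta(x) = \zeta(x) - \zeta(x+1)$ and its shift $(1-\varepsilon)\zeta(x)\eta(x-1) = \varepsilon[\zeta(x-1)-\zeta(x)]$, a direct computation yields
\[
\bar L_{\varepsilon,R}\zeta(x) = \varepsilon\zeta(x-1) - (1+\varepsilon)\zeta(x) + \zeta(x+1)\quad (x\ge 2),\qquad \bar L_{\varepsilon,R}\zeta(1) = 2\zeta(2) - (1+\varepsilon)\zeta(1).
\]
Since $\zeta_t^N(1) = \varepsilon(N)^{-NX_t^N}$ and $\varepsilon(N) = 1 - \beta/N + O(\log N/N^2)$ by Lemma~\ref{lem:3.2}, we have $\zeta_t^N(1) \to e^{\beta X_t^N}$ uniformly on bounded sets, so tight control of $\sup_{t\le T}\zeta_t^N(1)$ delivers tight control of $\sup_{t\le T}X_t^N$ via $X_t^N \le \log(\zeta_t^N(1))/\beta + o_N(1)$.

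The linear system above admits the explicit bounded stationary profile $u_x^\ast := 1 + \varepsilon(N)^{x-1}$ (with $u_1^\ast = 2$), which is the microscopic analogue of the equilibrium solution $\omega_\infty(u) = 1 + e^{-\beta u}$ of the continuum equation~\eqref{eq:omega}. Combining the Dynkin martingale for $\zeta_t^N(1)$ with a maximum-principle-type comparison of $\zeta_t^N(\cdot)$ to $u^\ast$, together with Doob's inequality applied to the martingale component and the initial bound on $E[|\zeta_0^N(1) - u_1^\ast|]$ following from the tightness of $X_0^N$ under hypothesis~\eqref{eq:5.pi} with $g\equiv 1$, should yield the required uniform tail estimate on $\sup_{t\le T}\zeta_t^N(1)$, hence on $\sup_{t\le T}X_t^N$.

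\textit{Main obstacle.} The delicate step is the maximum-principle comparison for the linear system: at $x=1$ the relation $\bar L\zeta(1) = 2\zeta(2) - (1+\varepsilon)\zeta(1)$ does not yield an outright supermartingale, since $\zeta(2) \le \zeta(1)$ only gives $\bar L\zeta(1) \le (1-\varepsilon)\zeta(1)$, producing a pointwise growth rate of order $N^2(1-\varepsilon) \sim N\beta$. The naive Gronwall estimate is therefore too weak. To close the argument one must use the damping from the bulk sites $x\ge 2$, effectively leveraging the fact that the spectral gap of the discrete operator $N^2\bar L_{\varepsilon,R}$ remains of order $\beta^2$ uniformly in $N$ (this being the discrete counterpart of the spectral bound for the parabolic PDE on the half-line with Robin boundary condition $2\omega_u(0,t)+\beta\omega(0,t)=0$).
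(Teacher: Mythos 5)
Your proposed argument does not close, as you flag yourself in the ``Main obstacle'' paragraph: you cannot overcome the $O(N\beta)$ pointwise growth rate of $\zeta_t^N(1)$ coming from the boundary relation $\bar L_{\varepsilon,R}\zeta(1)=2\zeta(2)-(1+\varepsilon)\zeta(1)$, and the spectral-gap or maximum-principle comparison you gesture at is not actually supplied.

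The paper's proof is substantially simpler and sidesteps the boundary drift altogether. Rather than working with $X_t^N=\langle\pi_t^N,1\rangle$ directly (which carries the singular $O(N)$ boundary drift you identify), or with the Hopf--Cole variable $\zeta_t^N(1)$, it tests the empirical measure against a fixed smooth $\varphi\in C_b^2(\R_+^\circ)$ satisfying $\varphi'\ge 0$, $\varphi\equiv 0$ on $(0,1]$ and $\varphi\equiv 1$ on $[2,\infty)$. Since $\varphi(1/N)=0$ for all $N\ge 1$, the boundary term in $N^2\bar L_{\varepsilon(N),R}\langle\pi^N,\varphi\rangle$ (see \eqref{eq:5.7-a}) vanishes identically; since $\varphi'\ge 0$ and $\varepsilon<1$, the interior transport term is dominated by the symmetric $(\varepsilon=1)$ one, which by summation by parts -- using $c_+(x,\eta)-c_-(x,\eta)=\eta(x)-\eta(x+1)$ -- is bounded above by $\|\varphi''\|_\infty$. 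Combining this with a Doob estimate for the martingale part, which by \eqref{eq:5.m^N} even vanishes in $L^2$ as $N\to\infty$, the elementary bound $X_t^N\le 2+\langle\pi_t^N,\varphi\rangle$, and the assumed control on $X_0^N$ coming from \eqref{eq:5.pi} with $g\equiv 1$, the proposition follows. The idea you are missing is that the test function can be cut off near the origin; this removes the boundary contribution outright and makes any delicate cancellation of creations and annihilations at the boundary unnecessary. Note also that in the paper's logic Proposition \ref{prop:x} is a prerequisite for the tightness of $\{\tilde\zeta^N\}$, so resting its proof on detailed control of the Hopf--Cole process would be working against the grain of the argument even if it could be made to close.
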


\begin{proof}
For $\fa \in C_b^2(\R_+^\circ)$, denote by $m_t^N(\fa)$ the martingale 
defined by 
\begin{displaymath}
m_t^N(\fa):=\lan\pi_t^N, \fa\ran-\lan\pi_0^N,\fa\ran 
-\int^t_0 N^2 \bar{L}_{\e(N),R} \lan\pi_s^N, \fa\ran ds.
\end{displaymath}
Then, by a simple computation, we have that
\begin{align} \label{eq:5.7-a}
N^2 \bar{L}_{\e(N),R} \lan\pi^N, \fa\ran
= N \sum_{x\in \N}& \big(\fa((x+1)/N) - \fa(x/N)\big) 
 \big\{ \e c_+(x,\eta) - c_-(x,\eta) \big\} \\
& + N \fa(1/N) \left\{ \e 1_{\{\eta(1)=0\}}
- 1_{\{\eta(1)=1\}} \right\},   \notag
\end{align}
and 
\begin{align} \label{eq:5.m^N}
\frac{d}{dt} \lan m^N(\fa)\ran_t
= \sum_{x\in \N}& \big(\fa((x+1)/N) - \fa(x/N)\big)^2 
\big\{ \e c_+(x,\eta_t^N) + c_-(x,\eta_t^N) \big\} \\
& + \fa(1/N)^2
\left\{ \e 1_{\{\eta_t^N(1)=0\}}
+ 1_{\{\eta_t^N(1)=1\}} \right\},   \notag
\end{align}
for the quadratic variation of $m_t^N(\fa)$, if the right hand sides of
these equalities converge absolutely.
Now take a function $\fa \in C_b^2(\R_+^\circ)$ such that $\fa'\ge 0$,
$\fa(u)=0$ for $0 < u \le 1$ and $\fa(u)=1$ for $u \ge 2$. 
Then, \eqref{eq:5.7-a} shows that
\begin{displaymath}
N^2 \bar{L}_{\e(N),R} \lan\pi^N, \fa\ran \le \|\fa''\|_\infty,
\end{displaymath}
similarly to the proof of \eqref{eq:4.11-b}.  Therefore, 
\begin{align*}
\sup_{0 \le t \le T}\lan\pi_t^N, \fa\ran & 
\le \lan\pi_0^N,\fa\ran+T\|\fa''\|_\infty
+\sup_{0 \le t \le T}|m_t^N(\fa)| \\
	& \le X_0^N +T\|\fa''\|_\infty +1 + \sup_{0 \le t \le T} m_t^N(\fa)^2,
\end{align*}
where we have estimated the martingale as $|m_t^N(\fa)| \le 
m_t^N(\fa)^2+1$.  One can apply Doob's inequality to show
$\lim_{N\to\infty} E[\sup_{0 \le t \le T} m_t^N(\fa)^2]=0$ from
\eqref{eq:5.m^N}, which, in
particular, proves $\sup_N E[\sup_{0 \le t \le T} m_t^N(\fa)^2]<\infty$.
Since the assumption of Theorem \ref{thm:hopf} (especially \eqref{eq:5.pi}
with $g\equiv 1$ and the integrability of $\rho_0$) implies that
$\lim_{\la\to\infty} \sup_N \nu^N(X_0^N>\la) =0$, the conclusion of the
proposition follows by the inequality:
$X_t^N \le 2+\lan\pi_t^N, \fa\ran$.
\end{proof}

\subsubsection{Tightness of $\{\tilde{\zeta}^N\}_N$}

Let $P_N$ be the probability distribution of $\tilde{\zeta}^N = 
\{\tilde{\zeta}^N(t,u)\}$ on $D([0,T],C(\R_+))$, where the space
$C(\R_+)$ is endowed with the topology determined by the uniform
convergence on every compact set of $\R_+$. 

\begin{lem}
The family of probability measures $\{P_N\}_{N\ge1}$ is relatively compact.
\end{lem}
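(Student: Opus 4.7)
The plan is to verify tightness of $\{P_N\}$ on $D([0,T], C(\R_+))$ by combining (a) tightness at each fixed time $t$ of $\tilde\zeta^N(t, \cdot)$ in $C(\R_+)$, uniformly in $N$, with (b) a temporal tightness property verified in integrated form against smooth test functions. The spatial Lipschitz regularity of $\tilde\zeta^N(t, \cdot)$ will then allow these two ingredients to be combined into tightness in the path space.

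For (a), observe that on each mesh interval $u \in [k/N, (k+1)/N]$ with $k \ge 1$, the definition gives $\tilde\zeta^N(t, u) = \e(N)^{-\{\sum_{y \ge k+1}\eta_t^N(y) + (k+1-Nu)\eta_t^N(k)\}}$, so $|\partial_u \tilde\zeta^N(t, u)| \le |\log\e(N)| \cdot N \cdot \tilde\zeta^N(t, u)$. By Lemma \ref{lem:3.2}, $|\log\e(N)|N \to \b$, so $\tilde\zeta^N(t, \cdot)$ is Lipschitz in $u$ with constant bounded by a universal multiple of $\tilde\zeta^N(t, 0) = \e(N)^{-NX_t^N} \le \exp((\b + o(1))X_t^N)$. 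Combined with Proposition \ref{prop:x} this yields $\lim_{\la \to \infty}\sup_N \bar{\mathbb{Q}}_{\nu^N}^N\bigl[\sup_{0\le t\le T,\, u\ge 0}\tilde\zeta^N(t, u) > \la\bigr] = 0$, and Arzel\`a--Ascoli then gives tightness of the laws of $\tilde\zeta^N(t, \cdot)$ in $C(\R_+)$ uniformly in $t \in [0, T]$.

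For (b), take $g \in C_0^\infty(\R_+^\circ)$ and apply Dynkin's formula to the functional $\eta \mapsto \int g(u)\tilde\zeta^N(t, u)\,du$, obtaining $\lan \tilde\zeta^N(t), g\ran = \lan \tilde\zeta^N(0), g\ran + \int_0^t A_s^N(g)\,ds + \tilde M_t^N(g)$. A direct Hopf--Cole computation yields the identity $N^2 \bar{L}_{\e(N), R}\zeta_s^N(x) = N^2[\zeta_s^N(x+1) + \e(N)\zeta_s^N(x-1) - (1+\e(N))\zeta_s^N(x)]$ for $x \ge 2$, together with a localized boundary contribution at $x = 1$. A discrete summation by parts in $x$ transfers this difference operator onto $g$; since $g$ is smooth, the resulting discrete second-order operator is bounded uniformly in $N$, so $|A_s^N(g)| \le C_g \sup_u \tilde\zeta^N(s, u)$. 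The quadratic variation $\langle \tilde M^N(g)\rangle_t$ is dominated by $C_g \int_0^t (\sup_u \tilde\zeta^N(s, u))^2\,ds$, since each microscopic jump changes $\tilde\zeta^N(\cdot, u)$ by $O(\tilde\zeta^N/N)$ while the total rate is $O(N^2)$. Doob's inequality and Aldous' criterion then yield tightness of $\{\lan \tilde\zeta^N(\cdot), g\ran\}_N$ in $D([0, T], \R)$ for every such $g$.

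The uniform Lipschitz bound from (a) lets one approximate the sup-norm modulus $\sup_{u \in [0, K]}|\tilde\zeta^N(\tau + \delta, u) - \tilde\zeta^N(\tau, u)|$ by the maximum over a finite grid with arbitrarily small error; together with the tightness of $\lan \tilde\zeta^N, g\ran$ for $g$ ranging over a countable dense family, this upgrades to tightness in $D([0, T], C(\R_+))$ by a standard diagonal/subsequence argument. The principal obstacle is the control of the drift $A_s^N(g)$: the pointwise generator $N^2 \bar{L}\tilde\zeta^N(s, u)$ is of order $N$ because the microscopic gradient $\eta(x-1) - \eta(x)$ does not cancel on short scales, and only integration against a smooth $g$---realized via discrete summation by parts---averages it to the $O(1)$ estimate needed. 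The boundary contribution from $\bar{L}_{\e(N), R}^b$, of order $N \tilde\zeta^N(s, 1/N)$ and localized near $u = 0$, is absorbed by the vanishing of $g$ there.
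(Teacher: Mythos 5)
Your three ingredients mirror the paper's almost exactly: (i) control of $\tilde\zeta^N(t,0)$ via the relation $\tilde\zeta^N(t,0)=\exp\{-(\log\e(N))NX_t^N\}$ together with Proposition~\ref{prop:x}; (ii) spatial equicontinuity from the Lipschitz bound $|\partial_u\tilde\zeta^N|\lesssim\tilde\zeta^N(t,0)$; and (iii) a temporal modulus from the Dynkin decomposition, summation by parts, and the vanishing martingale. The one point of departure is (iii): the paper chooses to pass to the logarithm --- i.e.\ the cumulative particle counts $\tfrac1N\sum_{x\ge[Nu]}\eta_t^N(x)$, tested against the translation-invariant family $\phi_\kappa(u,\cdot)$ --- which keeps all drift and quadratic-variation estimates bounded without the $\tilde\zeta$ factor, whereas you work with the Hopf--Cole variable $\zeta$ itself (the same machinery used later in the characterization step), carrying along the prefactor $\sup_u\tilde\zeta^N$ that must then be controlled again through Proposition~\ref{prop:x}. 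Both routes are valid; yours reuses the martingale decomposition \eqref{eq:5.5} and so is slightly more economical, while the paper's keeps the tightness estimates additive-functional-level and hence structurally simpler. One small imprecision: your heuristic for the quadratic variation of $\tilde M^N(g)$ (jump size $O(\tilde\zeta/N)$ times rate $O(N^2)$) is the estimate for the \emph{pointwise} process $\tilde\zeta^N(\cdot,u)$; once integrated against $g$, the cross-variations $\langle M^N(x),M^N(y)\rangle$ vanish for $x\neq y$ and the extra $N^{-1}$ factor appears, so the true bound is $C_g N^{-1}\int_0^t(\sup_u\tilde\zeta^N)^2\,ds$. The bound you state is still a valid upper estimate and is sufficient for Aldous' criterion, so this does not affect the correctness of your argument.
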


\begin{proof}
To conclude the lemma, by Prokhorov's theorem, it suffices to show the 
following three conditions for $\{P_N\}_{N\ge1}$:
\begin{align*}
&\; \text{(i) For every }  t \in [0,T],   \lim_{\lambda \to \infty}\sup_{N\ge1}
P_N[\tilde{\zeta}(t,0) > \lambda]=0. \\
&\, \text{(ii) For every $\de>0$ and }  t \in [0,T],  
\lim_{\ga \downarrow 0}\sup_{N\ge1} P_N[\sup_{|u-v| \le \ga}
|\tilde{\zeta}(t,u)-\tilde{\zeta}(t,v)| > \de]=0. \\
&\text{(iii) For every $\de>0$ and $K>0$,}  \lim_{\gamma \downarrow 0}
\limsup_{N\to\infty} P_N[\sup_{|t-s| \le \gamma,0 \le u \le K}
 |\tilde{\zeta}(t,u)-\tilde{\zeta}(s,u)|>\de]=0.
\end{align*}

By the relation: $\tilde{\zeta}^N(t,0) = \exp\{ - (\log\e) N X_t^N\}$,
we have that
\begin{align*}
P_N[\tilde{\zeta}^N(t,0) > \lambda] 
\le \bar{\mathbb{Q}}_{\nu^N}^N[X_t^N > \log \lambda/C],
\end{align*}
note that there exists $C>0$ such that $0<-\log\e\le C/N$ for
$\e=\e(N)$ and every $N\ge 1$.  Proposition \ref{prop:x} proves (i).

Since $\tilde{\zeta}^N(t,\cdot)$ is a non-increasing function, 
for every $0\le u < v$, we have that
\begin{align} \label{eq:5.Dzeta-1}
|\tilde{\zeta}^N(t,u)-\tilde{\zeta}^N(t,v)| 
&  \le \tilde{\zeta}^N(t,u)\left[\exp \big\{- (\log \e) I^N(t,u,v) \big\}
  -1\right] \\
& \le \tilde{\zeta}^N(t,0) \left[\exp\{C I^N(t,u,v)/N\}-1 \right],
   \notag
\end{align}
where
\begin{align*}
I^N(t,u,v) :=
\sum_{y=[Nu]+1}^{[Nv]}\eta_t^N(y)+([Nu]+1-Nu)\eta_t^N([Nu])
- ([Nv]+1-Nv)\eta_t^N([Nv]),
\end{align*}
which has a trivial bound: $I^N(t,u,v) \le N(v-u)$.
Therefore, we have that
\begin{align} \label{eq:5.Dzeta-2}
P_N[\sup_{|u-v| \le \ga}|\tilde{\zeta}^N(t,u)-\tilde{\zeta}^N(t,v)| > \de]
 & \le \bar{\mathbb{Q}}_{\nu^N}^N[e^{C X_t^N} (e^{C\ga}-1) > \de] \\
 & = \bar{\mathbb{Q}}_{\nu^N}^N[ X_t^N > \log(\de/(e^{C\ga}-1))/C ].
 \notag
\end{align}
Proposition \ref{prop:x} concludes (ii).

Finally we prove (iii).
By the definition of $\tilde{\zeta}^N(t,u)$ and Proposition \ref{prop:x},
we only need to show that for every $K>0$ and $\de>0$,
\begin{displaymath}
\lim_{\gamma \downarrow 0} \limsup_{N\to\infty} 
\bar{\mathbb{Q}}_{\nu^N}^N
[\sup_{|t-s| \le \gamma, 0 \le u \le K} |\frac{1}{N}\sum_{x=[Nu]}^{\infty}
\eta_t^N(x)-\frac{1}{N}\sum_{x=[Nu]}^{\infty}\eta_s^N(x)|>\de]=0.
\end{displaymath}
Noting that $\frac{1}{N}\sum_{x=[Nu]}^{\infty}\eta_t^N(x)=\lan\pi_t^N, 1_{[u,\infty)}\ran+\frac{1}{N}\eta_t^N([Nu])$, we consider smooth functions $\phi_{\k}(u,\cdot)$ which approximate the function $1_{[u,\infty)}$ as $\k\downarrow 0$
such that
\begin{align*}
\phi_{\k}(u,v)=0 & \quad \text{for $v \le u-\k$} \\
0 \le \phi_{\k}(u,v) \le 1 & \quad \text{for $u-\k \le v \le u+\k$} \\
\phi_{\k}(u,v)=1 & \quad \text{for $v \ge u+\k$} \\
\phi_{\k}(u,\cdot)= \phi_{\k}(u+v, \cdot +v) & \quad \text{for every $u$ and $v$}.
\end{align*}
In particular, we have that
\begin{displaymath}
| \lan\pi_t^N, \phi_{\k}(u,\cdot)\ran-\lan\pi_t^N, 1_{[u,\infty)}\ran |  \le \k \quad \text{for every $u$}. \\
\end{displaymath}
Moreover, $\|\phi_{\k}\|_{2,\infty}
:=\sup_{u} \{\|\phi_{\k}^{\prime}(u,\cdot)\|_{\infty}
 + \|\phi_{\k}^{\prime \prime}(u,\cdot)\|_{\infty}\}$ is finite. 
Now, it is enough to prove that for every $\k, \delta>0$,
\begin{displaymath}
\lim_{\gamma \downarrow 0} \limsup_{N\to\infty} 
\bar{\mathbb{Q}}_{\nu^N}^N[\sup_{|t-s| \le \gamma, 
\k \le u \le K} |\lan\pi_t^N, \phi_{\k}(u,\cdot)\ran-\lan\pi_s^N, \phi_{\k}(u,\cdot)\ran|>\delta]=0.
\end{displaymath}
However, the term $\lan\pi_t^N, \phi_{\k}(u,\cdot)\ran-\lan\pi_s^N, 
\phi_{\k}(u,\cdot)\ran$ is rewritten as 
\begin{displaymath}
\int^t_s \sum_{x \in \N}\phi_{\k}(u,\frac{x}{N})N \bar{L}_{\e(N),R}
 \eta_r^N(x)dr +\tilde{m}_t^N-\tilde{m}_s^N,
\end{displaymath}
where $\tilde{m}_{\cdot}^N$ is a martingale which vanishes as $N$ goes to 0;
recall \eqref{eq:5.m^N}. On the other hand, the absolute value of the integral
term is bounded from above by
$\int^t_s 2\k\|\phi_{\k}\|_{2,\infty} dr$, 
recall \eqref{eq:5.7-a}. This concludes the proof of (iii) and therefore
the lemma.
\end{proof}

\subsubsection{Characterization of limit points}

We start with considering a class of martingales associated with 
$\{\tilde{\zeta}^N\}_{N\ge 1}$. Let $M_t^N(x), x\in \N$, be the martingale
defined by
\begin{displaymath}
M_t^N(x) := \zeta_t^N(x)  - \zeta_0^N(x)   - 
\int^t_0 N^2 \bar{L}_{\e(N),R} (\zeta_s^N(x))ds.
\end{displaymath}
Some simple computations permit us to rewrite 
\begin{align} \label{eq:martingale}
N^2 \bar{L}_{\e(N),R} (\zeta_s^N(x))
= N^2 \big( \e \zeta_s^N(x-1) -(\e+1) \zeta_s^N(x)
  + \zeta_s^N(x+1) \big),
\end{align}
for every $x\in \N$, where we define $\zeta_s^N(0)
:= \e^{-1} \zeta_s^N(2)$.  However, denoting $\b(N) := N(1-\e(N))$
which converges to $\b$ as $N\to\infty$ by Lemma \ref{lem:3.2},
the right hand side of \eqref{eq:martingale} can be rewritten further as
$$
N^2 \Delta \zeta_s^N(x) + N \b(N) \nabla \zeta_s^N(x),
$$
where $\De\zeta =(\Delta \zeta(x))_{x\in\N}$ and 
$\nabla\zeta=(\nabla \zeta(x))_{x\in\N}$ are defined 
for $\zeta= (\zeta(x))_{x\in \Z_+}$ by
\begin{align*}
\Delta \zeta(x) = \zeta(x-1)-2\zeta(x)+ \zeta(x+1), \quad
\nabla \zeta(x) = \zeta(x) - \zeta(x-1),
\end{align*}
respectively.  Thus, for every $g \in C^2_0(\R_+)$, 
taking account of $\zeta_s^N(0)= \e^{-1} \zeta_s^N(2)$, we have that
\begin{align}  \label{eq:5.5}
& \int^{\infty}_0 g(u) \tilde{\zeta}^N(t,u)du = \frac{1}{N} \sum_{x\in\N}
\zeta_t^N(x) g(x/N) + R_t^N  \\
& \qquad = \frac{1}{N} \sum_{x\in\N}\zeta_0^N(x) g(x/N) 
	+ \int^t_0 b^N(\zeta_s^N,g)ds + M_t^N(g) + R_t^N,
	\notag
\end{align}
where
\begin{align}  \label{eq:5.6}
b^N(\zeta,g) = & \frac{1}{N} \sum_{x\in\N} \Delta^N g(x/N)
 \zeta(x) - \frac{\b(N)}{N}\sum_{x\in\N} \nabla^N g(x/N)\zeta(x)  \\
 & \qquad + N\big(g(1/N)\zeta(2)-g(0)\zeta(1)\big),  \notag \\
\intertext{with}
\Delta^N g(x/N)& = N^2\big(g((x+1)/N) + g((x-1)/N) 
  -2 g(x/N) \big),    \notag \\
\nabla^N g(x/N) & = N\big(g((x+1)/N) - g(x/N) \big), 
  \quad  x\in \N,    \notag
\end{align}
and
\begin{displaymath}
M_t^N(g)=\frac{1}{N} \sum_{x\in\N}M_t^N(x) g(\frac{x}{N}).
\end{displaymath}
The error term $R_t^N$ in \eqref{eq:5.5} is defined by
$$
R_t^N = \int^{\infty}_0 g(u) \tilde{\zeta}^N(t,u)du 
- \frac{1}{N} \sum_{x\in\N}\zeta_t^N(x) g(x/N)
$$
and admits a bound:
\begin{equation} \label{eq:5.RN}
|R_t^N| \le e^{CX_t^N} \left\{\left(e^{C/N}-1\right) \|g\|_{L^1(\R_+)}
+ \frac1N \|g'\|_\infty\times |\text{supp }g|\right\}
\end{equation}
in view of \eqref{eq:5.Dzeta-1} and \eqref{eq:5.Dzeta-2}.  Therefore, 
Proposition \ref{prop:x} shows that $R_t^N$ tends to $0$ as $N\to\infty$
in probability.

The martingale term in \eqref{eq:5.5} vanishes in the limit:

\begin{lem}  \label{lem:5.6}
$E[M_t^N(g)^2]$ converges to $0$ as $N\to\infty$.
\end{lem}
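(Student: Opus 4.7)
The plan is to write $M_t^N(g)$ as the Dynkin martingale of the cylinder functional
\[
F^N(\eta):=\frac{1}{N}\sum_{x\in\N}g(x/N)\,\zeta^\eta(x),\qquad \zeta^\eta(x):=\exp\!\left\{-\log\e(N)\,\textstyle\sum_{y\ge x}\eta(y)\right\},
\]
on the accelerated process $\eta_t^N$ with generator $N^2\bar L_{\e(N),R}$, so that
\[
E[M_t^N(g)^2]=E[\langle M^N(g)\rangle_t]=E\!\int_0^t N^2\!\sum_{e} r(\eta_s^N,e)\bigl(F^N(\eta_s^{N,e})-F^N(\eta_s^N)\bigr)^2 ds,
\]
where the sum runs over nearest-neighbor bond swaps and the boundary flip at $\{1\}$, each with unit rate bounded by $1$. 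The problem reduces to estimating this expected carr\'e du champ.

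\textbf{Key estimate.} The clean point is that a swap at bond $(x,x+1)$ affects only the single value $\zeta^\eta(x+1)$, since the tail sums $\sum_{y\ge z}\eta(y)$ are invariant for $z\le x$ and $z\ge x+2$; the boundary flip analogously touches only $\zeta^\eta(1)$. Moreover, the multiplicative form of $\zeta$ gives the identity
\[
\bigl|\zeta^{\eta^{x,x+1}}(x+1)-\zeta^\eta(x+1)\bigr|=\bigl(\tfrac{1}{\e(N)}-1\bigr)\zeta^\eta(x+1)\,1_{\{\eta(x)\ne\eta(x+1)\}}=O(1/N)\,\zeta^\eta(x+1),
\]
by Lemma \ref{lem:3.2}. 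Squaring, multiplying by the acceleration $N^2$ and the bounded rate, summing over the $O(N)$ bonds inside the compact support of $g$, and using $\zeta^\eta(x)\le\zeta^\eta(1)\le e^{\b' X^N}$ with $\b':=\sup_N(-N\log\e(N))<\infty$, I obtain the pointwise bound
\[
\frac{d}{dt}\langle M^N(g)\rangle_t\le \frac{C(g)}{N}\exp\bigl(2\b' X_t^N\bigr).
\]

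\textbf{Conclusion and main obstacle.} Integrating in $t$ and introducing the stopping time $\tau_\la:=\inf\{s:X_s^N>\la\}$, optional sampling yields $E[M_{t\wedge\tau_\la}^N(g)^2]=E[\langle M^N(g)\rangle_{t\wedge\tau_\la}]\le C(g)\,t\,e^{2\b'\la}/N\to 0$ for each fixed $\la$. Proposition \ref{prop:x} then makes $\bar{\mathbb Q}^N_{\nu^N}[\tau_\la< t]$ arbitrarily small uniformly in $N$ as $\la\to\infty$, and the orthogonality identity
\[
E[M_t^N(g)^2]=E[M_{t\wedge\tau_\la}^N(g)^2]+E\bigl[\langle M^N(g)\rangle_t-\langle M^N(g)\rangle_{t\wedge\tau_\la}\bigr]
\]
together with the pointwise bound above upgrades this to vanishing of the full expectation, via uniform integrability inherited from the tightness of $\sup_{s\le t}X_s^N$. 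The only genuinely delicate step --- and therefore the main obstacle --- is precisely this coupling: $\zeta_t^N(1)$ is not uniformly bounded in $N$ but rather grows exponentially like $e^{\b' X_t^N}$, so the carr\'e du champ estimate is useful only when paired with the tightness of $X_t^N$ supplied by Proposition \ref{prop:x}; all remaining work is routine bookkeeping.
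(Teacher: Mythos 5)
Your carr\'e du champ calculation is the same computation as the paper's, just organized bond-by-bond instead of site-by-site. The paper computes, for each $x$, $\frac{d}{dt}\lan M^N(x)\ran_t=\zeta_t^N(x)^2\{a_N c_-(x-1,\eta_t^N)+b_N c_+(x-1,\eta_t^N)\}$ with $a_N,b_N=N^2(1-\e)^2\cdot O(1)=O(1)$, and records the crucial structural fact $\lan M^N(x),M^N(y)\ran_t\equiv 0$ for $x\neq y$; your observation that a swap at $(x,x+1)$ touches only $\zeta^\eta(x+1)$ (since $\sum_{y\ge z}\eta(y)$ is invariant for $z\le x$ and $z\ge x+2$) is exactly what produces that orthogonality and yields the same aggregate bound $\frac{d}{dt}\lan M^N(g)\ran_t\le C(g)\,N^{-1}\zeta_t^N(1)^2$. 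So the approach is essentially the paper's.

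One caveat concerning your final step: you assert that uniform integrability of $\exp(2\b'\sup_{s\le t}X_s^N)$ is ``inherited from the tightness of $\sup_{s\le t}X_s^N$.'' That inference is not valid --- Proposition \ref{prop:x} controls only $\bar{\mathbb Q}^N_{\nu^N}[\sup_{s\le t} X_s^N>\la]$, not exponential moments --- so the passage from $E[M_{t\wedge\tau_\la}^N(g)^2]\to 0$ to the literal $L^2$ statement $E[M_t^N(g)^2]\to 0$ is not closed by what you wrote. (The paper's own ``this implies the conclusion of the lemma'' is equally silent on this point.) What your localization does give, with no further input, is $M_t^N(g)\to 0$ in probability, and that is all that is actually used to derive the weak form \eqref{eq:5.w}. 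If one insists on the stated $L^2$ convergence, a uniform-in-$N$ bound on $E[\zeta_t^N(1)^2]$ is needed --- e.g.\ available when $\nu^N=\mu^{\e(N)}_R$, where a telescoping product gives $E[\zeta(1)^2]=2(1+\e^{-1})$ --- but neither you nor the paper supply one under the stated hypotheses on $\nu^N$. Also, a small precision: your displayed ``identity'' for $|\zeta^{\eta^{x,x+1}}(x+1)-\zeta^\eta(x+1)|$ holds with factor $(\e^{-1}-1)$ only when $\eta(x)=1,\eta(x+1)=0$; in the opposite case the factor is $(1-\e)$. Both are $O(1/N)$, so the bound you use is correct, but as written it is an inequality, not an identity.
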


\begin{proof}
A straightforward computation leads to the following results for the
quadratic and cross-variations of $M_t^N(x)$:
\begin{align*}
& \frac{d}{dt} \lan M^N(x)\ran_t
= \zeta_t^N(x)^2
\left\{ a_N c_-(x-1,\eta_t^N)
+ b_N c_+(x-1,\eta_t^N) \right\}, \quad x \ge 2, \\
& \frac{d}{dt} \lan M^N(1)\ran_t
= \zeta_t^N(1)^2
\left\{ a_N 1_{\{\eta_t^N(1)=0\}}
+ b_N 1_{\{\eta_t^N(1)=1\}} \right\}, \\
& \lan M^N(x), M^N(y)\ran_t =0, \quad 1 \le x \not= y,
\end{align*}
where $a_N=N^2(1-\e)^2/\e, b_N=N^2(1-\e)^2$.  This implies the conclusion
of the lemma.
\end{proof}

To treat the boundary term appearing in $b^N(\zeta,g)$ (i.e.\ the third term
in the right hand side of \eqref{eq:5.6}), we need the following
ergodic property of the $\eta$-process at the boundary site $\{1\}$.  Note
that this ergodic property holds at the single site $\{1\}$ without taking
any average over sites near the boundary as performed in \cite{ELS-2}.

\begin{lem}  \label{lem:ergod}
Under the condition \eqref{eq:5.prop.5.4} in Proposition \ref{prop:x},
for every $0 \le T_1 \le T_2 \le T$ and $\de>0$, we have that
\begin{displaymath}
\lim_{N \to \infty} \bar{\mathbb{Q}}_{\nu^N}^N
\left[\left|\frac{1}{T_2-T_1}\int^{T_2}_{T_1}\eta^N_s(1)
-\frac{1}{2}\right|>\de \right]=0.
\end{displaymath}
\end{lem}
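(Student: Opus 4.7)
The plan is to exploit the conservation law for the process $\eta^N_t$: the interior exchange dynamics preserves the total mass $f(\eta):=\sum_{x\in\N}\eta(x)$, so the full generator applied to $f$ reduces to its boundary contribution,
\[
\bar L_{\e,R}f(\eta)=\e 1_{\{\eta(1)=0\}}-1_{\{\eta(1)=1\}}=\e-(1+\e)\eta(1).
\]
This identity ties the time average of $\eta^N_\cdot(1)$ directly to the fluctuations of the total mass $NX^N_t=\sum_{x\in\N}\eta^N_t(x)$, which are already controlled by Proposition \ref{prop:x}.

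Concretely, Dynkin's formula gives
\[
NX^N_t-NX^N_0=N^2\int_0^t\big[\e(N)-(1+\e(N))\eta^N_s(1)\big]\,ds+\mathcal{M}^N_t,
\]
where $\mathcal{M}^N_t$ is a square-integrable martingale. Dividing by $N^2$,
\[
\int_0^t\big[\e(N)-(1+\e(N))\eta^N_s(1)\big]\,ds=\frac{1}{N}(X^N_t-X^N_0)-\frac{\mathcal{M}^N_t}{N^2}.
\]
The first term on the right is $O(1/N)$ in probability, uniformly on $[0,T]$, by Proposition \ref{prop:x}. For the second, since only boundary jumps change $f$, a direct computation gives
\[
\frac{d}{dt}\langle\mathcal{M}^N\rangle_t=N^2\big[\e(N)1_{\{\eta^N_s(1)=0\}}+1_{\{\eta^N_s(1)=1\}}\big]\le 2N^2,
\]
so $\mathbb{E}[\langle\mathcal{M}^N\rangle_T]\le 2N^2T$, and Doob's inequality yields $\sup_{t\le T}|\mathcal{M}^N_t|/N^2\to 0$ in $L^2$ and hence in probability.

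Combining these two bounds and splitting $\int_{T_1}^{T_2}=\int_0^{T_2}-\int_0^{T_1}$ shows
\[
\int_{T_1}^{T_2}\big[\e(N)-(1+\e(N))\eta^N_s(1)\big]\,ds\longrightarrow 0
\]
in probability. Dividing by $(1+\e(N))(T_2-T_1)$ and invoking Lemma \ref{lem:3.2} to ensure $\e(N)/(1+\e(N))\to 1/2$ yields the claim. The only point requiring care is that the martingale's quadratic variation remain $O(N^2)$ rather than $O(N^3)$ under the diffusive scaling: this is guaranteed precisely because $f$ is a conserved quantity for the bulk dynamics, so all contributions to $\langle\mathcal{M}^N\rangle$ come from the single boundary site, where the rate and squared jump size are both bounded. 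The infinite extent of the state space plays no role.
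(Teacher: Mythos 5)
Your proof is correct and follows essentially the same route as the paper: the authors also apply Dynkin's formula to the total mass (their martingale $m_t^N$ is exactly your $\mathcal{M}_t^N/N$), observe that only the boundary contributes to the quadratic variation so that $E[|m_T^N|^2]\le T$, and invoke Proposition \ref{prop:x} to control $X_{T_1}^N,X_{T_2}^N$, concluding that $\int_{T_1}^{T_2}(1-2\eta^N_s(1))\,ds=O(1/N)$ in probability.
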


\begin{proof}
Consider the martingale 
\begin{displaymath}
m_t^N:=X_t^N-X_0^N-\int^t_0 N^2 \bar{L}_{\e(N),R} (X_s^N)ds.
\end{displaymath}
By \eqref{eq:5.7-a} with $\fa\equiv 1$, we see that $N^2 \bar{L}_{\e(N),R}
(X_s^N) = N(1-2\eta^N_s(1)) -\b(N)(1-\eta^N_s(1))$.  However, since
Lemma \ref{lem:3.2} implies $0<\b(N)=N(1-\e(N))\le C$ for $N\ge 1$,
this proves that
\begin{displaymath}
\left|\int^{T_2}_{T_1} \{1-2\eta^N_s(1)\} ds\right|
\le \frac1N \left( X_{T_2}^N+ X_{T_1}^N 
+ |m_{T_2}^N| + |m_{T_1}^N| + CT_2 \right).
\end{displaymath}
Thus, the lemma follows from \eqref{eq:5.prop.5.4} and the estimate: 
$E[|m_T^N|^2]\le T$, which follows from \eqref{eq:5.m^N} with $\fa\equiv 1$.
\end{proof}

Once the following lemma for the boundary term in $b^N(\zeta,g)$
is established, the weak form \eqref{eq:5.w} of the equation \eqref{eq:omega}
is easily derived from \eqref{eq:5.5}, \eqref{eq:5.6},
\eqref{eq:5.RN} and Lemma \ref{lem:5.6}.
Thus, the proof of Theorem \ref{thm:hopf} is concluded by the uniqueness of
the weak solutions of \eqref{eq:omega}, which will be shown in the next
subsection.

\begin{lem}  \label{lem:5.8}
If $g \in C^2_0(\R_+)$ satisfies the condition
$2 g'(0) - \b g(0)=0$, then we have that
$$
\lim_{N\to\infty} \bar{\mathbb{Q}}_{\nu^N}^N\left[ \left| \int_0^T
N\left( g(1/N) \zeta_t^N(2) -g(0) \zeta_t^N(1) \right)dt \right|>\de
\right] =0
$$
for every $\de>0$.
\end{lem}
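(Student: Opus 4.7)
The plan is to Taylor-expand the boundary expression in $1/N$, reduce the claim to a time-averaged ergodicity statement for $\eta^N_\cdot(1)$ weighted by $\zeta^N_\cdot(2)$, and then discretize time to separate the weight from the ergodic average, invoking the equicontinuity in $t$ from the tightness proof. First I would exploit $\eta_t^N(1) \in \{0,1\}$ to write $\zeta_t^N(1) = \zeta_t^N(2)\bigl(1 + (e^{\alpha_N}-1)\eta_t^N(1)\bigr)$ with $\alpha_N := -\log\e(N)$. By Lemma \ref{lem:3.2}, $\alpha_N = \b/N + O(\log N/N^2)$, so $e^{\alpha_N}-1 = \b/N + O(\log N/N^2)$; combined with $g(1/N)-g(0) = g'(0)/N + O(1/N^2)$ this yields
$$
N\bigl(g(1/N)\zeta_t^N(2) - g(0)\zeta_t^N(1)\bigr) = \zeta_t^N(2)\bigl(g'(0) - \b g(0)\eta_t^N(1)\bigr) + O(\log N/N)\,\zeta_t^N(2).
$$
The boundary condition $2g'(0) = \b g(0)$ rewrites the bracket as $\b g(0)(\tfrac12 - \eta_t^N(1))$. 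The pointwise estimate $\zeta_t^N(2) \le \exp(CX_t^N)$ (valid since $N\alpha_N$ is bounded by Lemma \ref{lem:3.2}) together with Proposition \ref{prop:x} ensures that the $O(\log N/N)$ remainder, after integration over $[0,T]$, vanishes in probability. The task therefore reduces to showing
$$
\int_0^T \zeta_t^N(2)\bigl(\tfrac12 - \eta_t^N(1)\bigr)\,dt \longrightarrow 0 \qquad \text{in }\bar{\mathbb{Q}}_{\nu^N}^N\text{-probability.}
$$

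To handle this, I would discretize time. Fix $M \in \N$, set $t_i = iT/M$, and split
$$
\int_0^T \zeta_t^N(2)\bigl(\tfrac12-\eta_t^N(1)\bigr)\,dt = \sum_{i=0}^{M-1} \zeta_{t_i}^N(2) \int_{t_i}^{t_{i+1}}\bigl(\tfrac12 - \eta_s^N(1)\bigr)\,ds + R^{N,M},
$$
with $R^{N,M} = \sum_i \int_{t_i}^{t_{i+1}} \bigl(\zeta_t^N(2) - \zeta_{t_i}^N(2)\bigr)\bigl(\tfrac12 - \eta_t^N(1)\bigr)\,dt$ satisfying $|R^{N,M}| \le \tfrac{T}{2}\sup_{|t-s|\le T/M}|\zeta_t^N(2)-\zeta_s^N(2)|$. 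For the principal sum, $\zeta_{t_i}^N(2)$ is uniformly bounded in probability by Proposition \ref{prop:x}, and Lemma \ref{lem:ergod} applied on each of the fixed $M$ intervals $[t_i, t_{i+1}]$ gives $\int_{t_i}^{t_{i+1}}(\tfrac12 - \eta_s^N(1))\,ds \to 0$ in probability as $N \to \infty$, so for any fixed $M$ the principal sum vanishes in probability.

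The main obstacle is therefore controlling $R^{N,M}$: since $\zeta_t^N(2) = \tilde\zeta^N(t, 2/N)$ and the spatial argument $2/N$ does not remain bounded away from zero, one cannot simply invoke continuity at a single fixed positive $u$. However, condition (iii) of the tightness argument established above is formulated uniformly over $u \in [0,K]$ for any fixed $K>0$, and the point $u = 2/N$ lies in $[0,K]$ as soon as $N$ is sufficiently large. Hence, for any $\de>0$ one can first choose $M$ so large that $\limsup_{N\to\infty}\bar{\mathbb{Q}}_{\nu^N}^N[|R^{N,M}|>\de]<\de$, and then let $N\to\infty$ in the principal sum. Combining these two estimates completes the argument.
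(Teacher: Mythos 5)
Your argument is correct and follows essentially the same route as the paper: Taylor-expand the boundary term, use the boundary condition on $g$ to collapse the bracket to a constant multiple of $\tfrac12-\eta_t^N(1)$, discretize time so the $\zeta$-weight can be frozen on each subinterval, and apply Lemma~\ref{lem:ergod} there together with the time-equicontinuity coming from the tightness proof. The only cosmetic difference is that you factor out $\zeta_t^N(2)$ where the paper factors out $\zeta_t^N(1)$.
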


\begin{proof}
Recalling $\zeta_t^N(2) = \zeta_t^N(1) e^{(\log\e)\eta_t^N(1)}$,
we have that
$$
N\left( g(1/N) \zeta_t^N(2) -g(0) \zeta_t^N(1) \right)
= \zeta_t^N(1) \big( g'(0) -\b g(0) \eta_t^N(1) + r_t^N \big),
$$
where the error term $r_t^N$ is defined by
\begin{align*}
r_t^N := & \left\{N\big(g(1/N)-g(0)\big) - g'(0)\right\}
 + N \big(g(1/N)-g(0)\big) \left\{e^{(\log\e)\eta_t^N(1)} -1 \right\} \\
& \quad + Ng(0) \left\{ e^{(\log\e)\eta_t^N(1)} -1 + \b \eta_t^N(1)/N\right\},
\end{align*}
and tends to $0$ as $N\to\infty$ by Lemma \ref{lem:3.2}.
Therefore, by the boundary condition for $g$, if we can show that
\begin{equation} \label{eq:5.average}
\lim_{N\to\infty} \bar{\mathbb{Q}}_{\nu^N}^N\left[ \left| \int_0^T
\zeta_t^N(1) \big(\eta_t^N(1) -1/2\big) dt  \right|>\de\right] =0
\end{equation}
for every $\de>0$, the proof of the lemma is concluded.  However, as we have
shown in the tightness, the process $\{\zeta_\cdot^N(1)\}_{N\ge 1}$ has the
equi-continuity:
$$
\lim_{\ga\downarrow 0} \limsup_{N\to\infty} \bar{\mathbb{Q}}_{\nu^N}^N \left[
\sup_{\begin{subarray}{c} |t-s|\le \ga\\
0\le s < t\le T \end{subarray}}
  |\zeta_t^N(1)-\zeta_s^N(1)| > \de' \right] =0,
$$
for every $\de'>0$.  Therefore, if we divide the interval $[0,T]$
into small subintervals with length $\ga$:
$$
\left| \int_0^T \zeta_t^N(1) \big(\eta_t^N(1) -1/2\big) dt  \right|
\le \sum_{k=0}^{[T/\ga]}
\left| \int_{k\ga}^{(k+1)\ga\wedge T}
 \zeta_t^N(1) \big(\eta_t^N(1) -1/2\big) dt  \right|,
$$
$\zeta_t^N(1)$ in the integrand is close to $\zeta_{k\ga}^N(1)$ (if $\ga$ is
small enough) and we can apply Lemma \ref{lem:ergod} for the integral
$\int_{k\ga}^{(k+1)\ga\wedge T} \big(\eta_t^N(1) -1/2\big) dt$.  In other
words, $\zeta_t^N(1)$ changes slowly compared with the rapid motion of 
$\eta_t^N(1)$.  This proves \eqref{eq:5.average}.
\end{proof}

\begin{rem}  \label{rem:5.1}
For $g$ satisfying the same condition as Lemma \ref{lem:5.8},
a stronger assertion:
$$
\lim_{N\to\infty} \bar{\mathbb{Q}}_{\nu^N}^N\left[ \left| \int_0^T
N\sqrt{N} \left( g(1/N) \zeta_t^N(2) -g(0) \zeta_t^N(1) \right) dt 
\right|>\de \right] =0
$$
holds for every $\de>0$ even by multiplying an extra factor $\sqrt{N}$.
Indeed, this can be seen by noting that the error estimate given in
the proof of Lemma \ref{lem:ergod} is $O(1/N)$ and that in Lemma \ref{lem:3.2}
is $O(\log N/N^2)$ as $N\to\infty$.  This fact will be used in \cite{FS-1}.
\end{rem}

\subsubsection{Uniqueness of weak solutions}

Here, we prove the uniqueness of the weak solutions of \eqref{eq:omega}.
The method is standard, especially because the equation is linear.  We first
extend the class of test functions $g=g(u)$ in the weak form \eqref{eq:5.w}
to the family of all $g =g(t,u) \in C_0^{1,2}([0,T]\times\R_+^\circ)$ 
satisfying
$2 \partial_u g (t,0) - \b g(t,0)=0$ for every $t\in[0,T]$, and show that
\begin{align}  \label{eq:5.g}
\int^{\infty}_0 g(t,u) \omega(t,u)du =& \int^{\infty}_0 g(0,u)\omega(0,u)du\\
&+ \int^{t}_0 \int^{\infty}_0 \big(\partial_s g(s,u) + \partial_u^2 g(s,u)
-\b\partial_u g(s,u)\big) \omega(s,u)du ds   \notag
\end{align}
holds for every such $g$ and $t\in [0,T]$.  Indeed, this can be done by
dividing the interval $[0,t]$ into small pieces, assuming $g$ to be constant
in $s$ on each small interval, applying the weak form \eqref{eq:5.w}
on each such small interval and finally by passing to the limit.

Secondly, since the solution $\omega$ is assumed to be bounded, we can
further extend the class of $g$'s from functions having compact supports
in $[0,T]\times\R_+$ to those having the exponentially decaying property
as $u\to \infty$ in the sense that $\sup_{t\in[0,T],u\in\R_+} \{|g(t,u)| +
|\partial_t g(t,u)| + |\partial_u g(t,u)| + |\partial_u^2 g(t,u)|\} e^{ru}
<\infty$
for some $r>0$.  Finally, let $\fa\in C_0^\infty(\R_+)$ be given arbitrarily
and define $g\equiv g_\fa =g(t,u)$ as the solution of the backward equation:
\begin{equation*}
\left\{
\begin{aligned}
\partial_t g+ \partial_u^2 g- \b \partial_u g =0,& \quad t \in [0,T), 
\; u\in \R_+,  \\
g(T,u) = \fa(u),&  \quad u\in \R_+,  \\
2 \partial_u g(t,0) - \b g(t,0) =0,& \quad t \in [0,T).
\end{aligned}
\right.
\end{equation*}
Such $g$ exists and has the exponentially decaying property.  By choosing
this $g$ in \eqref{eq:5.g} with $t=T$, we obtain that
$$
\int^{\infty}_0 \fa(u) \omega(T,u)du
= \int^{\infty}_0 g_\fa(0,u) \omega(0,u)du,
$$
and this concludes the proof of the uniqueness of the weak solutions
of \eqref{eq:omega}.

\subsection{Proof of Theorem \ref{thm:fthm}}

We will show that Theorem \ref{thm:fthm} for the process $q_t$ follows from
Theorem \ref{thm:hopf} for the process $\eta_t$. To this end, 
we first see that the condition \eqref{eq:5.pi} is reduced from the 
condition \eqref{eq:2.fini} if we define $\eta$ and $\rho_0$ by 
$\eta=\eta^q$ and $\rho_0=-\psi_0^{\prime}$, respectively.  

For $g\in C_b^1(\R_+)$ satisfying $g(v)=0$ for $v\le 1/K$ and $g(v)=c$ for
$v\ge K$ with some $K>1$ and $c\in\R$, taking $g^{\prime}$ as $f$ in 
\eqref{eq:2.fini}, we have that
\begin{equation*}
\lim_{N \to \infty} \nu^N[|\int^{\infty}_0 g^{\prime}(u) \tilde{\psi}^N_q(u)
du - \int^{\infty}_0 g^{\prime}(u)\psi_0(u)du|>\de]=0
\end{equation*}
for every $\de>0$. By the definition, 
\begin{displaymath}
\int^{\infty}_0 g^{\prime}(u) \tilde{\psi}^N_q(u)du 
= \frac1N \sum_{i \in \Z_+}\int^{\frac{q_i}{N}}_0 g^{\prime}(u)du
= \frac1N \sum_{i \in \Z_+}g(\frac{q_i}{N})
=\frac{1}{N}\sum_{x \in \Z_+}g(\frac{x}{N})\eta^q(x).
\end{displaymath}
On the other hand, by the integration by parts formula,
\begin{displaymath}
\int^{\infty}_0 g^{\prime}(u)\psi_0(u)du=-\int^{\infty}_0 g(v)\psi^{\prime}_0(v)dv
=\int^{\infty}_0 g(v)\rho_0(v)du.
\end{displaymath}
Therefore, \eqref{eq:5.pi} is shown for functions $g$ satisfying the above
conditions.  However, this can be extended to a wider class of 
functions $g\in C_b(\R_+)$ satisfying $g(v)=c$ for $v\ge K$ with some $K>1$ and $c\in\R$, by approximating such $g$ by a sequence of continuous functions $g_n \in C_b^1(\R_+)$ satisfying $g_n(v)=0$ for $v\le 1/K$ and $g(v)=c$ for $v\ge K$ with some $K>1$ and $c\in\R$ noting that $0 \le \eta(x),\rho_0(v) \le 1$.

In order to complete the proof of the theorem, it is now sufficient to show that \eqref{eq:5.zeta} in Theorem \ref{thm:hopf} implies \eqref{eq:2.ft} with $\psi(t,u)=\frac{1}{\b}\log{\omega(t,u)}$. The non-linear equation \eqref{eq:fhydro} for $\psi_t$ follows from \eqref{eq:omega} for $\omega_t$. Especially, the boundary condition $2 \partial_u \omega(t,0) + \b \omega(t,0)=0$ implies that $\partial_u\psi(t,0)=-1/2$ and $\omega(t,\infty)=1$ implies that $\psi(t,\infty)=0$ for $t>0$.

Since $\tilde{\psi}_{q_t}^N(u)=\frac{1}{\b}\log \tilde{\zeta}^N(t,u)+o(1)$ 
with an error going to $0$ in probability as $N\to\infty$ in view of 
\eqref{eq:5.Dzeta-1} and \eqref{eq:5.Dzeta-2}, noting that $\omega(t,u),
\tilde{\zeta}^N(t,u) \ge 1$, \eqref{eq:5.zeta} implies that
\begin{equation*}
\lim_{N \to \infty} \bar{\mathbb{Q}}_{\nu^N}^N
\left[\sup_{0 \le t \le T, 0 \le u \le K}
|\tilde{\psi}_{q_t}^N(u)-\psi(t,u)|>\de\right]=0.
\end{equation*}
for every $T>0$, $K>0$ and $\de>0$. This completes the proof of 
Theorem \ref{thm:fthm}.

\vskip 7mm
\noindent
{\bf Acknowledgement} $\;$ 
The authors thank H.\ Spohn for leading them to the problems discussed
in this paper.

\end{document}